\title
{The probability that a random multigraph is simple, II}
\date{23 July, 2013}
\author{Svante Janson}
\thanks{Partly supported by the Knut and Alice Wallenberg Foundation}
\address{Department of Mathematics, Uppsala University, PO Box 480,
SE-751~06 Uppsala, Sweden}
\email{svante.janson@math.uu.se
  \qquad http://www2.math.uu.se/{\tiny$\sim$}svante/}
\subjclass[2010]{05C80; 05C30, 60C05} 
\numberwithin{equation}{section}
\renewcommand\le{\leqslant}
\renewcommand\ge{\geqslant}
\newtheorem{theorem}{Theorem}[section]
\newtheorem{lemma}[theorem]{Lemma}
\theoremstyle{definition}
\newtheorem{remark}[theorem]{Remark}
\newtheorem*{ack}{Acknowledgement}
\theoremstyle{remark}
\newenvironment{romenumerate}[1][0pt]{
\addtolength{\leftmargini}{#1}\begin{enumerate}
 }{\end{enumerate}}
\newcounter{oldenumi}
{\setcounter{oldenumi}{\value{enumi}}
\begin{romenumerate} \setcounter{enumi}{\value{oldenumi}}}
{\end{romenumerate}}
\newcounter{thmenumerate}
\newcounter{romxenumerate}   
\newcounter{xenumerate}   
\newcommand\pfitem[1]{\par(#1):}
\newcommand{\refT}[1]{Theorem~\ref{#1}}
\newcommand{\refL}[1]{Lemma~\ref{#1}}
\newcommand{\refR}[1]{Remark~\ref{#1}}
\newcommand{\refS}[1]{Section~\ref{#1}}
\newcommand{\refand}[2]{\ref{#1} and~\ref{#2}}
\newcommand\REM[1]{{\raggedright\texttt{[#1]}\par\marginal{XXX}}}
\xdef\klockan{\the\count1.0\the\count255}
\xdef\klockan{\the\count1.\the\count255}\fi
\newcommand{\sumin}{\sum_{i=1}^n}
\newcommand\set[1]{\ensuremath{\{#1\}}}
\newcommand\xpar[1]{(#1)}
\newcommand\bigpar[1]{\bigl(#1\bigr)}
\newcommand\Bigpar[1]{\Bigl(#1\Bigr)}
\newcommand\biggpar[1]{\biggl(#1\biggr)}
\newcommand\lrpar[1]{\left(#1\right)}
\newcommand\xcpar[1]{\{#1\}}
\def\rompar(#1){\textup(#1\textup)}    
\newcommand\parfrac[2]{\lrpar{\frac{#1}{#2}}}
\newcommand\Bigparfrac[2]{\Bigpar{\frac{#1}{#2}}}
\def\xexp(#1){e^{#1}}
\newcommand\nn{[n]}
\newcommand\ntoo{\ensuremath{{n\to\infty}}}
\newcommand\Ntoo{\ensuremath{{N\to\infty}}}
\newcommand\punkt[1]{\if.#1\else.\spacefactor1000\fi{#1}}
\newcommand\iid{i.i.d\punkt}    
\newcommand\ie{i.e\punkt}
\newcommand\eg{e.g\punkt}
\newcommand\cf{cf\punkt}
\newcommand\whp{w.h.p\punkt}
\newcommand{\tend}{\longrightarrow}
\newcommand\dto{\overset{\mathrm{d}}{\tend}}
\newcommand\eqd{\overset{\mathrm{d}}{=}}
\newcounter{CC}
\newcounter{cc}
\newcommand\E{\operatorname{\mathbb E{}}}
\renewcommand\P{\operatorname{\mathbb P{}}}
\newcommand\Po{\operatorname{Po}}
\newcommand\Be{\operatorname{Be}}
\newcommand\ga{\alpha}
\newcommand\gb{\beta}
\newcommand\gd{\delta}
\newcommand\gl{\lambda}
\newcommand\gO{\Omega}
\newcommand\eps{\varepsilon}
\renewcommand\phi{\xxx}  
\newcommand\cH{\mathcal H}
\newcommand\ett[1]{\boldsymbol1\xcpar{#1}}
\newcommand\qw{^{-1}}
\newcommand\qww{^{-2}}
\newcommand\qq{^{1/2}}
\newcommand\qqw{^{-1/2}}
\renewcommand{\=}{:=}
\newcommand\ooo{[0,\infty)}
\newcommand\dtv{d_{\mathrm{TV}}}
\newcommand{\mgf}{moment generating function}
\newcommand\gnd{\ensuremath{G(n,(d_i)_1^n)}}
\newcommand\gndx{\ensuremath{G^*(n,(d_i)_1^n)}}
\newcommand\ggb{\ensuremath{G\bigpar{n,(s_i)_1^{n'},(t_j)_1^{n''}}}}
\newcommand\ggbx{\ensuremath{G^*\bigpar{n,(s_i)_1^{n'},(t_j)_1^{n''}}}}
\newcommand\ggx{\ensuremath{G^*}}
\newcommand\ettoN{\bigpar{1+O(N\qw)}}
\newcommand\ettoNx{{1+O(N\qw)}}
\renewcommand\ij{_{ij}}
\newcommand\xij{^{(i,j)}}
\newcommand\xii{^{(i,i)}}
\newcommand\hX{\widehat X}
\newcommand\hY{\widehat Y}
\newcommand\hZ{\widehat Z}
\newcommand{\sumi}{\sum_{i}}
\newcommand{\sumij}{\sum_{i<j}}
\newcommand\nnn{^{(n)}}
\newcommand\dn{\ensuremath{(d_i)_1^n}}
\newcommand\hd{\bar{d}}
\newcommand\hn{\bar{n}}
\newcommand\sumd{\sum_i d_i}
\newcommand\sumdd{\sum_i d_i^2}
\newcommand\maxd{\max_i d_i}
\newcommand\gab{_{\ga\gb}}
\newcommand\gagb{_{\ga,\gb}}
\newcommand\tI{\tilde I}
\newcommand\tX{\widetilde X}
\newcommand\tY{\widetilde Y}
\newcommand\uI{J}
\newcommand\ub{\check}
\newcommand\uX{\ub X}
\newcommand\uY{\ub Y}
\newcommand\ugl{\ub\gl}
\newcommand\uc{\overline}
\newcommand\zoo{Z_\infty}
\newcommand\hG{\overline G}
\newcommand{\Holder}{H\"older}
\begin{document}

\begin{abstract} 
Consider a random multigraph $G^*$ with given vertex degrees
$d_1,\dots,d_n$,
constructed by the configuration model.
We give a new proof of the fact that, 
asymptotically for a sequence of such multigraphs with
the number of edges $\tfrac12\sumd\to\infty$,
the probability that the multigraph is simple stays away from 0 if and
only if $\sumdd=O\bigpar{\sumd}$. 
The new proof uses the method of moments, which makes it possible to 
use it in some applications concerning convergence in distribution.

Correponding results for bipartite graphs are included.
\end{abstract}

\maketitle

\section{Introduction}\label{S:intro}

Let
\gnd{} be the random (simple) graph with 
vertex set $\nn\=\set{1,\dots,n}$
and vertex degrees $d_1,\dots,d_n$, uniformly chosen among all
such graphs. (We assume that there are any such graphs at all; in particular,
$\sum_i d_i$ has to be even.) 
A standard method to study $\gnd$ is to consider the related random
labelled multigraph \gndx{} defined by taking a set of $d_i$ \emph{half-edges}
at each vertex $i$ and then joining the 
half-edges into edges by taking a random partition of the set of all
half-edges into pairs.
This is known as the configuration model, 
and was introduced by 
Bollob\'as \cite{BB80},
see also  \cite[Section II.4]{Bollobas}.
(See Bender and Canfield
\cite{BenderC} and Wormald \cite{WormaldPhD,Wormald81} for related
arguments.)
Note that \gndx{} is defined for all $n\ge1$ and all sequences $\dn$
such that $\sumd$ is even (we tacitly assume this throughout the
paper), 
and that we obtain \gnd{} if we condition \gndx{} on being a simple
graph. 

It is then important to estimate the probability 
that \gndx{} is simple, and in particular to decide whether 
\begin{equation}
\label{a2}
\liminf_{\ntoo}\P\bigpar{\gndx\text{ is simple}}>0  
\end{equation}
for  given sequences $\dn=(d_i\nnn)_1^n$. 
(We assume throughout that we consider a sequence of instances, and consider
asymptotics as \ntoo. Thus our
degree sequence $\dn$ depends on $n$, 
and so do other quantities introduced below;
for simplicity, we omit this from the notation.)
Note that \eqref{a2} implies that any statement holding for \gndx{}
with probability tending to 1 as \ntoo{} does so for \gnd{} too.
(However, note also that
\citet{BollobasRiordan} have recently shown that the method may be
applied even when \eqref{a2} does not hold; in the problem they study,
the probability that $\gndx$ is simple may be almost exponentially small,
but they show that the error probability for the studied properties are
even smaller.)

Various sufficient conditions for \eqref{a2} have been given by
several authors, see
Bender and Canfield \cite{BenderC}
Bollob\'as \cite{BB80,Bollobas},
McKay \cite{McKay} and
McKay and Wormald \cite{McKayWo}.
The final result was proved in \cite{SJ195}, where, in particular, the
following was shown.
We will throughout the paper let 
\begin{equation}\label{N}
  N\=\sumd,
\end{equation}
the total number of half-edges; thus $N$ is even and the number of edges in
\gnd{} or \gndx{} is $N/2$.
(The reader that makes a detailed comparison with \cite{SJ195} should note
that the notation differs slightly.)

\begin{theorem}[\cite{SJ195}]
  \label{T0}
Assume that $N\to\infty$. Then 
$$
\liminf_\ntoo \P(\gndx\text{ is simple})>0
\iff
\sum_i d_i^2 = O(N).$$
\end{theorem}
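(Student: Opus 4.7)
My plan is to apply the method of moments to the pair $(X,Y)$, where $X$ is the number of loops of $\ggx$ and $Y$ is the number of (unordered) pairs of parallel edges; $\ggx$ is simple iff $X = Y = 0$, so describing the joint distribution of $(X,Y)$ suffices.

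The key computation is the joint factorial moment $\E[(X)_r(Y)_s]$, obtained directly from the matching definition of the configuration model. The probability that a prescribed family of $k$ disjoint half-edge pairs all occur in the random matching is $(N-2k-1)!!/(N-1)!! = N^{-k}\bigpar{1 + O(k^2/N)}$. Summing over ordered $r$-tuples of loop locations and over ordered $s$-tuples of parallel-edge locations gives, after bookkeeping,
\[
\E[(X)_r(Y)_s] = \lambda_1^r \lambda_2^s \, \bigpar{1+o(1)},
\qquad
\lambda_1 := \frac{\sum_i d_i(d_i-1)}{2N}, \quad \lambda_2 := \lambda_1^2,
\]
uniformly for fixed $r,s$, under the assumption $\sum d_i^2 = O(N)$.

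For the sufficient direction, suppose $\sum d_i^2 = O(N)$. Passing to a subsequence along which $\lambda_1 \to \ell \ge 0$, the joint factorial moments converge to those of independent $\Po(\ell)$ and $\Po(\ell^2)$ variables. The method of moments then gives $(X,Y) \dto (\Po(\ell), \Po(\ell^2))$, whence $\P(\ggx \text{ simple}) \tend e^{-\ell - \ell^2} > 0$; as every subsequence contains such a convergent sub-subsequence, $\liminf \P(\ggx \text{ simple}) > 0$.

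For the necessary direction, I would argue the contrapositive: if $\sum d_i^2/N \to \infty$ along some subsequence, then $\P(\ggx \text{ simple}) \to 0$. Two mechanisms cover the possibilities, depending on how the "mass" $\sum d_i^2$ is distributed. First, if some vertex $v$ satisfies $d_v^2/N \to \infty$, then a direct sequential pairing of $v$'s half-edges shows $\P(\text{no loop at } v) \le \exp\bigpar{-c\, d_v^2/N} \to 0$. Otherwise, restrict the moment calculation above to loops $X_M$ at vertices of degree at most a slowly growing truncation $M = M(n)$; a diagonal choice of $M$ makes $\E X_M \to \infty$ while keeping $X_M$ accurately Poisson, so $\P(X = 0) \le \P(X_M = 0) \to 0$. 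The delicate step is this second mechanism: moment bounds on the full loop count $X$ alone do not yield $\P(X=0) \to 0$ from $\E X \to \infty$, because $\Var X$ may be inflated by a few vertices of degree comparable to $\sqrt N$; the truncation/diagonal argument is what makes a unified moment-based treatment cover both halves of the theorem.
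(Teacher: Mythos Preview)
Your sufficient-direction argument has a genuine gap: the claimed factorial moment identity
\[
\E\bigl[(X)_r(Y)_s\bigr] = \lambda_1^r\lambda_2^s\bigl(1+o(1)\bigr),
\qquad \lambda_2=\lambda_1^2,
\]
is \emph{not} valid under the hypothesis $\sum_i d_i^2=O(N)$ alone; it requires the strictly stronger condition $\max_i d_i=o(N^{1/2})$. The paper states this explicitly in the introduction: when $\max_i d_i$ is of order $N^{1/2}$ (which is compatible with $\sum_i d_i^2=O(N)$), the limiting distribution of the loop/parallel-edge count is \emph{not Poisson}. A concrete counterexample: take $d_1=\lfloor N^{1/2}\rfloor$ and all remaining $d_j=1$. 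Then $\sum_i d_i^2\sim 2N$, yet $Y\equiv 0$ identically (no vertex other than $1$ can support a parallel edge), while $\lambda_1\to 1/2$ and your $\lambda_2\to 1/4>0$. So already $\E Y=0\neq \lambda_2(1+o(1))$, and your predicted limit $e^{-\ell-\ell^2}=e^{-3/4}$ is wrong; the correct answer here is $e^{-1/2}$. More generally, if $d_1=d_2\sim N^{1/2}$ then $\lambda_{12}\to 1$ and $Y_{12}=\binom{X_{12}}{2}$ with $X_{12}$ approximately $\Po(1)$, which is manifestly non-Poisson.

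This is precisely why the paper does \emph{not} approximate $Z$ by a single Poisson. Instead it introduces vertex- and pair-specific parameters $\lambda_i=\binom{d_i}{2}/N$ and $\lambda_{ij}=\sqrt{d_i(d_i-1)d_j(d_j-1)}/N$, builds the comparison variable $\hZ=\sum_i \hX_i+\sum_{i<j}\binom{\hX_{ij}}{2}$ from independent Poissons with these individual means, and matches \emph{all} moments $\E Z^m=\E\hZ^m+O(N^{-1/2})$ (Lemma~\ref{LZ}). The resulting formula $\P(Z=0)\sim\exp\bigl(-\sum_i\lambda_i-\sum_{i<j}(\lambda_{ij}-\log(1+\lambda_{ij}))\bigr)$ reduces to $e^{-\lambda_1-\lambda_1^2}$ only when every $\lambda_{ij}=o(1)$, i.e.\ when $\max_i d_i=o(N^{1/2})$; in general the second-order term $\sum_{i<j}\lambda_{ij}^2/2$ does \emph{not} collapse to $\lambda_1^2$ because the diagonal contribution $\sum_i\lambda_i^2$ need not be negligible. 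Your argument is essentially the classical Bollob\'as argument, which the paper is written specifically to extend beyond its range of validity. (Your necessary-direction sketch is in the right spirit and close to the paper's vertex-splitting reduction, though the ``diagonal choice of $M$'' step would need more care.)
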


Let $X_i$ be the number of loops at vertex $i$ in \gndx{}
and $X_{ij}$ the number of edges between $i$ and $j$.
Moreover, let $Y\ij\=\binom{X\ij}{2}$ be the number of pairs of parallel
edges between $i$ and $j$.
We define
\begin{equation}
  \label{z}
Z\=\sumin X_i + \sumij Y\ij;
\end{equation}
thus \gndx{} is simple $\iff Z=0$.

As shown in \cite{SJ195},  
in the case $\maxd=o(N\qq)$, it is 
not difficult to show \refT{T0} by
the method used by Bollob\'as \cite{BB80,Bollobas}, 
proving a Poisson approximation of $Z$ by the method of moments.
In general, however, $\maxd$ may be of the order $N\qq$ even when
$\sumdd=O(N)$, and in this case, $Z$ may have a 
non-Poisson asymptotic distribution. The proof in \cite{SJ195} therefore
used a more complicated method with switchings.

The purpose of this paper is to give a new proof of \refT{T0}, and of the
more precise \refT{T1} below, 
using Poisson approximations of $X_i$ and $X\ij$ to find the asymptotic
distribution of $Z$.
The new proof uses the method of moments. 
(In \cite{SJ195}, we were pessimistic about the
possibility of this; our pessimism was thus unfounded.)
The new proof presented here is conceptually simpler than the
proof in \cite{SJ195}, but it is not much shorter. The main reason for the
new proof is that it enables us to 
transfer not only results on convergence in probability but also 
some results on convergence in distribution from the random multigraph
$\gnd$ to the simple graph \gndx{} by conditioning on the existence of
specific loops or pairs of parallel edges, see
\cite{sir} for an application (which was the motivation for the present paper)
and \cite{SJ196} for an earlier example of this method in a case where
$\sumdd=o(N)$ and the results of \cite{SJ195} are enough.

We  define (with some hindsight)
\begin{align}
  \gl_i&\=\binom{d_i}2\frac1N = \frac{d_i(d_i-1)}{2N} \label{gli}
\intertext{and, for $i\neq j$,}
  \gl\ij&\= \frac{\sqrt{d_i(d_i-1)d_j(d_j-1)}}{N}, \label{glij}
\end{align}
and let $\hX_i$ and $\hX\ij$ be independent Poisson random variables with
\begin{align}\label{hX}
  \hX_i\sim\Po(\gl_i),&&&\hX\ij\sim\Po(\gl\ij).
\end{align}
In analogy with \eqref{z}, we further define
$\hY\ij\=\binom{\hX\ij}2$ and
\begin{equation}
  \label{hz}
\hZ\=\sumin \hX_i + \sumij \hY\ij
=\sumin \hX_i + \sumij \binom{\hX\ij}2.
\end{equation}

We shall show that the distribution of $Z$ is well approximated by $\hZ$,
see \refL{Ldtv}, which yields our new proof of the following estimate.
\refT{T0} is a simple corollary.

\begin{theorem}[\cite{SJ195}]\label{T1}
Assume that $N\to\infty$. Then 
  \begin{equation*}
	\begin{split}
\P\bigpar{\gndx \text{ is simple}}	
&=
\P(Z=0)
=\P(\hZ=0)+o(1)
\\&
=\exp\biggpar{-\sum_i \gl_i -\sum_{i<j}\bigpar{\gl_{ij}-\log(1+\gl_{ij})}}	  
+o(1).
	\end{split}
  \end{equation*}
\end{theorem}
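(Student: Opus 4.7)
The theorem combines three equalities. The first, $\P(\gndx \text{ is simple}) = \P(Z=0)$, is immediate from the definition \eqref{z}: the multigraph $\gndx$ is simple iff $X_i = 0$ for all $i$ and $Y\ij = 0$ for all $i<j$, which is precisely $\{Z = 0\}$. The third equality is a direct Poisson computation: since $\hZ = 0$ iff $\hX_i = 0$ for all $i$ and $\hX\ij \le 1$ for all $i<j$, independence of the family $\{\hX_i, \hX\ij\}$ yields
\begin{equation*}
\P(\hZ = 0) = \prod_i \P(\hX_i = 0) \prod_{i<j} \P(\hX\ij \le 1)
= \prod_i e^{-\gl_i} \prod_{i<j}(1+\gl\ij)\, e^{-\gl\ij},
\end{equation*}
and taking logarithms produces the stated exponential formula.

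The substantive middle equality, $\P(Z = 0) = \P(\hZ = 0) + o(1)$, is the content of the promised Lemma~\ref{Ldtv}. My plan for that lemma is the method of joint factorial moments applied to $(X_i, X\ij)$. First I would establish, by direct combinatorics in the configuration model, that for every fixed finite tuple of exponents,
\begin{equation*}
\E\Bigl[\prod_i X_i\fall{a_i} \prod_{i<j} X\ij\fall{b\ij}\Bigr]
= \prod_i \gl_i^{a_i} \prod_{i<j} \gl\ij^{b\ij} + o(1).
\end{equation*}
Each falling factorial counts ordered half-edge pairings realizing the prescribed loops and parallel edges, and the leading term matches the corresponding moments of independent Poissons with parameters \eqref{gli}--\eqref{glij}. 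This gives joint convergence in distribution on any finite subset of coordinates. To turn finite-dimensional convergence into the desired equality for $\P(Z = 0)$, I would truncate: for small $\gd > 0$, separate the finite ``heavy'' block $\{i : \gl_i > \gd\} \cup \{(i,j) : \gl\ij > \gd\}$ from its light complement. On the heavy block, the joint law of the involved variables agrees with that of the corresponding independent Poissons up to $o(1)$ by the moment computation; the light part of $Z$ (resp.\ $\hZ$) has expectation $O(\gd)$, using the factorial moment estimate with $a_i, b\ij \le 2$, and so is zero with probability $1 - O(\gd)$. Letting $\gd\downto 0$ after $\ntoo$ completes the argument.

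The main obstacle is the moment estimate in the regime where $\maxd$ is of order $\sqrt{N}$: then some $\gl_i$ or $\gl\ij$ is of constant order and arbitrarily high factorial moments must be controlled, not merely low ones. The pairing denominator $(N-1)(N-3)\cdots$ must be expanded to sufficient order, and multiplicative errors accumulated over many vertices and pairs must be held to $1 + o(1)$; this is precisely where the hypothesis $\sumdd = O(N)$ is indispensable. A further subtlety is that the limit $\hZ$ is non-Poissonian, being a sum of independent $\hX_i \sim \Po(\gl_i)$ and $\hY\ij = \binom{\hX\ij}{2}$ terms, so the truncation must respect the joint rather than merely the marginal structure; this is what distinguishes the argument here from the elementary Poisson proof that sufficed in \cite{SJ195} when $\maxd = o(\sqrt{N})$.
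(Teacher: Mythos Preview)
Your proposal has two gaps, one structural and one technical.

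\textbf{Structural gap.} The theorem assumes only $N\to\infty$, not $\sum_i d_i^2=O(N)$; you treat the latter as a hypothesis. When $\sum_i d_i^2/N\to\infty$, the claim $\P(Z=0)=\P(\hZ=0)+o(1)$ reduces to showing $\P(Z=0)\to 0$ (since $\P(\hZ=0)\le\exp(-\sum_i\gl_i)\to 0$). The paper handles this separately by a vertex-splitting reduction: split high-degree vertices until the new degree sequence $(\hd_i)$ satisfies $\sum_i\hd_i^2\le AN$, observe that simplicity of $\gndx$ implies simplicity of the split multigraph, apply the bounded case to the latter, and let $A\to\infty$. Your proposal does not address this case at all.

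\textbf{Technical gap in the truncation.} Even under $\sum_i d_i^2=O(N)$, your claim that the light part of $Z$ has expectation $O(\gd)$ is false. Take $d_i=2$ for all $i$: then every $\gl_i=1/N$ is light for any fixed $\gd>0$, yet $\sum_i\gl_i\to 1/2$ and $\sum_i X_i$ is asymptotically $\Po(1/2)$, so the light part is nonzero with probability bounded away from $0$. More generally, $\sum_i\gl_i=O(1)$ and $\sum_{i<j}\gl_{ij}^2=O(1)$ hold, but neither sum need be small when restricted to light indices. Finite-dimensional convergence of $(X_i,X_{ij})$ on a finite heavy block therefore does not control $\P(Z=0)$, because the light block can contribute a $\Theta(1)$ amount to $Z$ that is not captured by any finite-dimensional marginal.

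The paper sidesteps this by working with the scalar $Z$ directly rather than with the joint law of $(X_i,X_{ij})$. \refL{LZ} shows $\E Z^m=\E\hZ^m+O(N^{-1/2})$ for each fixed $m$ via a careful decoupling and support-counting argument, and \refL{Ldtv} then applies the method of moments to $Z$ itself. A further subtlety you do not mention is that $\hZ$ has moments growing like $m^{2m}$, so the usual Carleman criterion fails; the paper uses the one-sided version \eqref{carl+} valid for nonnegative variables. Working with the single variable $Z$ is what makes the light contributions tractable: they are already folded into the scalar moments, with no truncation needed.
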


As said above, our proof uses the method of moments, and most of the work
lies in showing the following estimate, proved in \refS{Spflz}. 
This is done by combinatorial
calculations that are straightforward in principle, but nevertheless rather
long.

\begin{lemma}
  \label{LZ}
Suppose that $\sumdd=O(N)$.
Then, for every fixed $m\ge1$,
\begin{equation}\label{lz}
  \E Z^m = \E \hZ^m + O\bigpar{N\qqw}.
\end{equation}
\end{lemma}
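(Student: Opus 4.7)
The plan is to prove \refL{LZ} by direct computation of moments, reducing both $\E Z^m$ and $\E\hZ^m$ to identical linear combinations of joint factorial moments of $\{X_i,X\ij\}$ (respectively $\{\hX_i,\hX\ij\}$), then using exact combinatorial formulas in each model to compare term by term. Expanding via the multinomial theorem,
\begin{equation*}
Z^m = \sum_{\substack{a_i,b\ij\ge0\\\sum a_i+\sum b\ij=m}}\frac{m!}{\prod_i a_i!\prod_{i<j} b\ij!}\prod_i X_i^{a_i}\prod_{i<j}\binom{X\ij}{2}^{b\ij},
\end{equation*}
and identically for $\hZ^m$. Using the identities $x^a=\sum_k S(a,k)x\fall{k}$ (Stirling numbers of the second kind) and the analogous polynomial expansion of $\binom{x}{2}^b$ in the $x\fall{\ell}$, we can rewrite $\E Z^m$ and $\E\hZ^m$ as finite linear combinations, with the \emph{same} universal coefficients, of joint factorial moments indexed by profiles $(c_i,c\ij)$ with $\sum c_i+\sum c\ij\le2m$. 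Only boundedly many such profiles appear, so it suffices to compare, for each fixed profile, the configuration-model moment with the Poisson one.

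The two joint factorial moments admit exact closed forms. By independence of the Poisson variables,
\begin{equation*}
\E\prod_i\hX_i\fall{c_i}\prod_{i<j}\hX\ij\fall{c\ij}=\prod_i\gl_i^{c_i}\prod_{i<j}\gl\ij^{c\ij},
\end{equation*}
while in the configuration model, interpreting the joint factorial moment as the expected number of ordered tuples of disjoint specified loops and multi-edges yields
\begin{equation*}
\E\prod_i X_i\fall{c_i}\prod_{i<j}X\ij\fall{c\ij}=\prod_i\frac{d_i\fall{D_i}}{2^{c_i}}\prod_{k=0}^{K-1}\frac1{N-1-2k},
\end{equation*}
where $D_i=2c_i+\sum_{j\neq i}c\ij$ and $K=\sum_i c_i+\sum_{i<j}c\ij$. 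The definitions \eqref{gli}--\eqref{glij} of $\gl_i,\gl\ij$ are arranged exactly so that these two formulas agree to leading order in $1/N$: $\prod_{k=0}^{K-1}(N-1-2k)\qw = N^{-K}\bigpar{1+O(1/N)}$, and $d_i\fall{D_i}$ matches $\bigpar{d_i(d_i-1)}^{c_i+\half\sum_{j\neq i}c\ij}$ up to a factor $1+O(D_i^2/d_i)$, or vanishes for small $d_i$, in which case the Poisson contribution is itself negligible.

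It remains to sum the per-profile discrepancies. Under $\sumdd=O(N)$ one has $\sum_i\gl_i=O(1)$ and $\sum_{i<j}\gl\ij^2=O(1)$, so each joint factorial moment contributes $O(1)$; the relative discrepancy at a profile is $O(N\qw)+O(\maxd\qw)$, and when the absolute discrepancies are summed over indices one encounters quantities like $\sum_i d_i^3/N^2$, which are $O(N\qqw)$ via $\sum_i d_i^3\le\maxd\cdot\sumdd=O(N^{3/2})$ together with $\maxd=O(N\qq)$. Since the number of profiles is bounded, this yields the claimed error $O(N\qqw)$. The main technical obstacle is the combinatorial bookkeeping required to extract the precise leading-order correction at each profile; in particular, the half-integer powers of $d_i(d_i-1)$ produced by the square root in $\gl\ij$ must be matched against the polynomial falling factorials $d_i\fall{D_i}$ on the configuration-model side, which requires a careful parity analysis of the multi-index contributions at each vertex, as well as separate treatment of small-degree vertices where the asymptotic matching degenerates.
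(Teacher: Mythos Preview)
Your route differs from the paper's. The paper never compares the joint factorial moments of $(X_i,X_{ij})$ directly against those of $(\hX_i,\hX_{ij})$; instead it inserts an intermediate ``uncoupled'' variable $\uc Z$ (same marginals as $Z$ but with independent summands), compares $\uc Z$ to $\hZ$ termwise via the single-variable Lemmas~\ref{LXi} and~\ref{LYij}, and separately compares $Z$ to $\uc Z$ by classifying indicator products as \emph{good} or \emph{bad} according to whether half-edges collide across different summands. Your closed-form joint factorial moment identity is correct and effectively merges these two steps, since cross-summand half-edge collisions contribute zero to $\E\prod_i X_i\fall{c_i}\prod_{i<j}X_{ij}\fall{c_{ij}}$ automatically. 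This is a legitimate and arguably more direct path.

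That said, the error analysis is the heart of the lemma and you have only sketched it. Two points are essential and missing. First, ``only boundedly many profiles'' is false literally: there are $O(1)$ profile \emph{types}, but each must be summed over all placements of its support in $\nn$, and it is these sums that must be shown $O(1)$ for the leading terms and $O(N\qqw)$ for the discrepancies. Second, and crucially, you never isolate the structural fact that makes those sums controllable: since $Y_{ij}=\binom{X_{ij}}{2}=\tfrac12 X_{ij}\fall2$, every nonzero $c_{ij}$ in your expansion satisfies $c_{ij}\ge2$, whence every active vertex has $D_i\ge2$. The paper makes precisely this observation and emphasizes that without it the argument collapses, because $\sum_{i<j}\gl_{ij}$ is in general unbounded (for $d$-regular graphs it is of order $n$). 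You further need that $D_i=2$ gives an \emph{exact} match $(d_i)_2=(d_i(d_i-1))^1$, so the discrepancy lives only at vertices with $D_i\ge3$; there the uniform estimate $\bigl|(d)_D-(d(d-1))^{D/2}\bigr|=O(d^{D-1})$ together with $\sum_i d_i^{D-1}\le(\max_i d_i)^{D-3}\sum_i d_i^2=O\bigl(N^{(D-1)/2}\bigr)$ closes the bound and dissolves the ``parity obstacle'' you flag at the end. With these facts stated and used, your argument goes through; the single example $\sum_i d_i^3/N^2$ does not by itself carry the weight.
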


The statement means, more explicitly, that for every $C<\infty$ and $m\ge1$,
there is a constant $C'=C'(C,m)$ such that if $\sumdd\le CN$, then $|\E
Z^m-\E\hZ^m|\le C'N\qqw$. 

\begin{remark}\label{Rerr}
  The proof shows that the error term $O(N\qqw)$ in \eqref{lz} may be
  replaced by $O(\maxd/N)$, which always is at least as good by \eqref{dN}.
\end{remark}

In \refS{Sbi}, we give some remarks on the corresponding, but somewhat
different, result for bipartite graphs due to
\citet{BlaSta}.

\begin{ack}
  I thank Malwina Luczak for helpful comments.
\end{ack}

\section{Preliminaries}

We denote  falling factorials by $(n)_k\=n(n-1)\dotsm(n-k+1)$.

\begin{lemma}\label{Lpom}
  Let $X\in\Po(\gl)$ and let $Y\=\binom X2$. Then, for every $m\ge1$, 
$\E(Y)_m=h_m(\gl)$
for a polynomial $h_m(\gl)$ of degree $2m$.
Furthermore, $h_m$ has a double root at $0$, so $h_m(\gl)=O(|\gl|^2)$ for
$|\gl|\le1$,
and if $m\ge2$, then 
 $h_m$ has a triple root at $0$, so $h_m(\gl)=O(|\gl|^3)$ for
$|\gl|\le1$.
\end{lemma}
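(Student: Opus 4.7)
The plan is to prove \refL{Lpom} by expanding $(Y)_m$ as a polynomial in $X$ and then passing to factorial moments. Write $Y = \binom{X}{2} = \tfrac12 X(X-1)$, so that $(Y)_m = Y(Y-1)\dotsm(Y-m+1)$ is a polynomial in $X$ of degree $2m$, with leading coefficient $2^{-m}$. Express this polynomial in the falling-factorial basis:
\begin{equation*}
(Y)_m = \sum_{k=0}^{2m} a_k^{(m)} (X)_k,
\qquad a_{2m}^{(m)} = 2^{-m} \neq 0.
\end{equation*}
Taking expectations and using the standard identity $\E(X)_k = \gl^k$ for $X\in\Po(\gl)$, we obtain
\begin{equation*}
\E(Y)_m = \sum_{k=0}^{2m} a_k^{(m)} \gl^k =: h_m(\gl),
\end{equation*}
a polynomial of degree exactly $2m$ in $\gl$.

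To show that $\gl=0$ is at least a double root, I would use the fact that $Y$ is itself a factor of $(Y)_m$, so $(Y)_m = 0$ whenever $Y=0$, i.e., whenever $X\in\{0,1\}$. Substituting $X=0$ into the expansion gives $a_0^{(m)} = 0$, since $(0)_k = 0$ for $k\ge 1$. Substituting $X=1$ then gives $a_0^{(m)} + a_1^{(m)} = 0$, whence $a_1^{(m)} = 0$. Thus $h_m(\gl) = \sum_{k\ge 2} a_k^{(m)} \gl^k$, which is $O(\gl^2)$ as $\gl \to 0$, and in particular $O(|\gl|^2)$ for $|\gl|\le 1$.

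For $m\ge 2$, I would upgrade this by noting that $(Y-1)$ is also a factor of $(Y)_m$, and $Y=1$ when $X=2$. Therefore $(Y)_m$ vanishes at $X=0,1,2$. Substituting $X=2$ into the expansion, where $(2)_0 = 1$, $(2)_1 = 2$, $(2)_2 = 2$, and $(2)_k = 0$ for $k\ge 3$, gives $a_0^{(m)} + 2a_1^{(m)} + 2a_2^{(m)} = 0$, and combined with the vanishing of $a_0^{(m)}$ and $a_1^{(m)}$ this forces $a_2^{(m)} = 0$. Hence $h_m(\gl) = O(\gl^3)$ as $\gl\to 0$, giving the stated bound for $|\gl|\le 1$.

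There is no real obstacle here: once the observation that $Y$ and (for $m\ge 2$) also $Y-1$ divide $(Y)_m$ is in place, the whole argument reduces to evaluating the falling-factorial expansion at $X=0,1,2$. The only mildly delicate point is confirming that $h_m$ has degree exactly $2m$ (rather than less), which is immediate from the leading coefficient $2^{-m}$.
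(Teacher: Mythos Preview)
Your proof is correct and shares with the paper the key observation that $(Y)_m$ vanishes at $X=0,1$ (and also at $X=2$ when $m\ge2$). The mechanics differ slightly: you expand $(Y)_m$ in the falling-factorial basis and use $\E(X)_k=\gl^k$ to read off that $a_0^{(m)}=a_1^{(m)}=0$ (and $a_2^{(m)}=0$ for $m\ge2$) by evaluating at $X=0,1,2$; the paper instead writes $h_m(\gl)=\sum_{j\ge0}\bigl(\tbinom{j}{2}\bigr)_m\,\gl^j e^{-\gl}/j!$ directly from the Poisson mass function, observes that the summands with $j\le2$ (respectively $j\le1$) vanish, and concludes $h_m(\gl)=O(\gl^3)$ (respectively $O(\gl^2)$) as $\gl\to0$, which forces the root since $h_m$ is already known to be a polynomial. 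Your route is a bit more algebraic and makes the degree-$2m$ claim completely explicit via the leading coefficient $2^{-m}$; the paper's route is marginally shorter but leans on the separately stated fact that Poisson moments are polynomials in $\gl$. Either way the argument is the same in substance.
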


\begin{proof}
  $(Y)_m$ is a polynomial in $X$ of degree $2m$, and it is well-known (and
  easy to see from the moment generating function) that
  $\E X^k$ is a polynomial in $\gl$ of degree $k$ for every $k\ge0$.

Suppose that $m\ge2$.
If $X\le2$, then $Y\le1$ and thus $(Y)_m=0$.
Hence, 
\begin{equation}
  h_m(\gl)=\sum_{j=3}^\infty \lrpar{\binom j2}_m\frac{\gl^j}{j!}e^{-\gl}
=O(\gl^3)
\end{equation}
as $\gl\to0$, and thus $h_m$ has a triple root at $0$.
The same argument shows that $h_1$ has a double root at $0$; this is also
seen from the explicit formula 
\begin{equation}\label{ey}
h_1(\gl)=\E Y =\E(X(X-1)/2)=\tfrac12\gl^2.  
\end{equation}
\end{proof}

\begin{lemma}\label{Lcarl}
Let $\hZ$ be given by \eqref{hz} and assume that $\gl\ij=O(1)$.
\begin{romenumerate}[-10pt]
\item 
 For every fixed $t\ge0$,
\begin{equation}\label{lc1}
\E\exp\lrpar{t\sqrt{\hZ}}=
\exp\biggpar{O\biggpar{\sum_i\gl_i+\sum_{i<j}\gl\ij^2}}.
\end{equation}
\item 
For every $C<\infty$, 
if\/ $\sum_i\gl_i+\sum_{i<j}\gl\ij^2\le C$, then
\begin{equation}\label{lc2}
  \bigpar{\E \hZ^m}^{1/m} = O(m^2),
\end{equation}
uniformly in all such $\hZ$ and $m\ge1$.
\end{romenumerate}
\end{lemma}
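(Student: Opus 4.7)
The strategy for (i) is to decouple $\hZ$ into two independent pieces. Write $\hZ=U+V$ with
$$ U \= \sumin \hX_i, \qquad V \= \sumij \binom{\hX\ij}{2}, $$
so that $U\sim\Po\bigpar{\sumin\gl_i}$ and $U,V$ are independent (they are built from disjoint Poisson families). Since $\sqrt{a+b}\le\sqrt a+\sqrt b$, we have $e^{t\sqrt{\hZ}}\le e^{t\sqrt U}\,e^{t\sqrt V}$, and independence then gives
$$ \E e^{t\sqrt{\hZ}} \le \E e^{t\sqrt U}\cdot\E e^{t\sqrt V}. $$
Applying subadditivity of $\sqrt{\cdot}$ a second time to $V$ and using independence of the $\hX\ij$'s factorises the second expectation as
$$ \E e^{t\sqrt V}\le\prod_{i<j}\E e^{t\sqrt{\binom{\hX\ij}{2}}}. $$

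The proof thus reduces to two one-variable Poisson estimates. For $U\sim\Po(\mu)$ with $\mu=\sumin\gl_i$, I would show $\E e^{t\sqrt U}\le\exp\bigpar{C_1(t)\mu}$ by analysing the series $e^{-\mu}\sum_k e^{t\sqrt k}\mu^k/k!$: in the regime $\mu\le 1$, pulling out one factor of $\mu$ in the Taylor expansion suffices; in the regime $\mu\ge 1$, the crude bound $\sqrt k\le 1+k/2$ together with the standard Poisson MGF gives the claim. For each factor in the product above, I expand
$$ \E e^{t\sqrt{\binom{\hX\ij}{2}}}-1 = \sum_{k\ge 2}\bigpar{e^{t\sqrt{k(k-1)/2}}-1}\frac{\gl\ij^k}{k!}e^{-\gl\ij}, $$
where the sum starts at $k=2$ since $\binom{k}{2}=0$ for $k\le 1$. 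Pulling out $\gl\ij^2$ leaves a series in $\gl\ij$ that, thanks to the assumption $\gl\ij=O(1)$, is uniformly bounded by some $C_2(t)<\infty$. Hence $\E e^{t\sqrt{\binom{\hX\ij}{2}}}\le 1+C_2(t)\gl\ij^2\le\exp\bigpar{C_2(t)\gl\ij^2}$. Multiplying over $i<j$ and combining with the bound on $U$ yields \eqref{lc1}.

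For (ii), I apply (i) with $t=1$. The hypothesis $\sumin\gl_i+\sumij\gl\ij^2\le C$ in particular forces $\gl\ij\le\sqrt C=O(1)$, so (i) applies and yields $\E e^{\sqrt{\hZ}}\le K$ for a constant $K=K(C)$. Because $\hZ\ge 0$, the terms in the Taylor series $e^{\sqrt{\hZ}}=\sum_{k\ge 0}(\sqrt{\hZ})^k/k!$ are non-negative, and keeping only the $k=2m$ term gives $\E\hZ^m=\E(\sqrt{\hZ})^{2m}\le (2m)!\,\E e^{\sqrt{\hZ}}\le K\,(2m)!$. Stirling's formula then yields $((2m)!)^{1/m}=O(m^2)$ uniformly in $m\ge 1$, which is \eqref{lc2}.

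The main obstacle is the $O(\gl\ij^2)$ (and not $O(\gl\ij)$) estimate on each parallel-edge factor; this is what requires both the double zero of $\binom{\hX\ij}{2}$ at $k=0,1$ and the hypothesis $\gl\ij=O(1)$, which is what bounds the tail of the residual series uniformly. A secondary subtle point is that the loop contribution $U$ must be handled as a single $\Po(\mu)$ rather than factor-by-factor, since a bound $\prod_i\E e^{t\sqrt{\hX_i}}$ would introduce a spurious $e^{O(nt)}$ term and fail.
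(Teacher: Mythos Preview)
Your proof is correct and follows essentially the same outline as the paper: bound $\sqrt{\hZ}$ by a sum via subadditivity of the square root, factorise using independence, and show each parallel-edge factor is $1+O(\gl_{ij}^2)$ by exploiting that $\binom{k}{2}=0$ for $k\le 1$. Part (ii) is identical in spirit (the paper uses $(2m)!\le(2m)^{2m}$ rather than Stirling, but this is cosmetic).

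There is one point where your write-up is more complicated than necessary and your accompanying remark is mistaken. You claim that the loop contribution must be handled as a single Poisson $U\sim\Po(\mu)$ because a factor-by-factor bound $\prod_i\E e^{t\sqrt{\hX_i}}$ would produce a spurious $e^{O(nt)}$. In fact the paper does precisely the factor-by-factor bound, and it works: since each $\hX_i$ is a non-negative \emph{integer}, one has $\sqrt{\hX_i}\le\hX_i$, whence
\[
\E e^{t\sqrt{\hX_i}}\le \E e^{t\hX_i}=\exp\bigl((e^t-1)\gl_i\bigr)=\exp\bigl(O(\gl_i)\bigr),
\]
with no stray constant. Multiplying over $i$ gives $\exp\bigl(O(\sum_i\gl_i)\bigr)$ directly, avoiding your two-regime case split on $\mu$. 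The ``spurious $e^{O(nt)}$'' would only arise from an overly crude bound like $\sqrt{k}\le 1+k/2$ that does not vanish at $k=0$; the integer bound $\sqrt{k}\le k$ avoids this. Similarly, for the parallel-edge factors the paper uses the clean inequality $\sqrt{\hY_{ij}}\le\hX_{ij}\ett{\hX_{ij}\ge2}$ and computes the MGF of the right-hand side, which is a minor variant of your direct series expansion. Both routes are fine; the paper's are slightly shorter.
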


\begin{proof}
\pfitem{i}
  By \eqref{hz}, 
  \begin{equation}\label{cc1}
	\sqrt{\hZ}
\le \sum_i\sqrt{\hX_i}+\sum_{i<j}\sqrt{\hY\ij}
\le \sum_i{\hX_i}+\sum_{i<j}\hX\ij\ett{\hX\ij\ge2},
  \end{equation}
where the terms on the right hand side are independent.
Furthermore,
\begin{equation}\label{cc2}
  \E e^{t\hX_i}=\exp\lrpar{(e^t-1)\gl_i}=\exp\bigpar{O(\gl_i)}
\end{equation}
and, since $t$ is fixed and $\gl\ij=O(1)$,
\begin{equation}\label{cc3}
  \begin{split}
\E\exp&\lrpar{t\hX\ij\ett{\hX\ij\ge2}}
=\E e^{t\hX\ij}-\P(\hX\ij=1)(e^t-1)
\\&
=e^{(e^t-1)\gl\ij}-(e^t-1)\gl\ij e^{-\gl\ij}
\\&
=1+(e^t-1)\gl\ij\bigpar{1-e^{-\gl\ij}}+O\bigpar{\gl\ij^2}
\\&
=1+O\bigpar{\gl\ij^2}
\le\exp\bigpar{O\xpar{\gl\ij^2}}.
  \end{split}
\end{equation}
Consequently,
\eqref{lc1} follows from \eqref{cc1}--\eqref{cc3}.

\pfitem{ii}
Taking $t=1$,  (i) yields
$\exp\bigpar{\sqrt{\hZ}} \le C_1$ for some $C_1$.
Since $\exp\bigpar{\sqrt{\hZ}}\ge \hZ^m/(2m)!$, this implies
\begin{equation}
  \E \hZ^m \le (2m)!\, \E \exp\bigpar{\sqrt{\hZ}} \le C_1 (2m)^{2m},
\end{equation}
and thus $\E\bigpar{\hZ^m}^{1/m}\le 4C_1 m^2$ for all $m\ge1$.
\end{proof}

\section{Proof of \refL{LZ}}\label{Spflz}

Our proof of \refL{LZ} is rather long, although based on simple calculations,
and we will formulate a couple of intermediate steps as separate lemmas.
We begin by noting that the assumption $\sumdd=O(N)$ implies
  \begin{equation}
	\label{dN}
\maxd=O\bigpar{N\qq}
  \end{equation}
and thus, see \eqref{gli}--\eqref{glij},
\begin{equation}
  \label{glO}
\gl_i=O(1) \qquad \text{and} \qquad
\gl\ij=O(1),
\end{equation}
uniformly in all $i$ and $j$.
Furthermore, for any fixed $m\ge1$,
\begin{equation*}
  \sumi \gl_i^m = O\Bigpar{\sumi\gl_i}
= O\biggpar{\frac{\sumdd}N}=O(1).
\end{equation*}
Similarly, for any fixed $m\ge2$,
\begin{equation*}
  \sumij \gl\ij^m = O\Bigpar{\sumij\gl\ij^2}
= O\biggpar{\frac{\sum\ij d_i^2d_j^2}{N^2}}=O(1).
\end{equation*}
In particular,
\begin{equation}
  \label{judd}
\sumi\gl_i+\sumij\gl\ij^2=O(1).
\end{equation}
However, note that
there is no general bound on $\sumij\gl\ij$, as is shown by the
case of regular graphs with all $d_i=d\ge2$ and all $\binom n2$ $\gl\ij$
equal to $d(d-1)/N=(d-1)/n$, so their sum is $(d-1)(n-1)/2$.
This complicates the proof, since it forces us to obtain error estimates
involving $\gl\ij^2$. 

Let $\cH_i$ be the set of the half-edges at vertex $i$; thus $|\cH_i|=d_i$.
Further, let $\cH\=\bigcup_i\cH_i$ be the set of all half-edges.
For convenience, we order $\cH$  (by any linear order).

For $\ga,\gb\in\cH$, let $I\gab$ be the indicator that the half-edges $\ga$
and $\gb$ are joined to an edge in our random pairing. (Thus
$I\gab=I_{\gb\ga}$.)
Note that 
\begin{align}
  X_i&=\sum_{\substack{\ga,\gb\in \cH_i:\;\ga<\gb}} I\gab, \label{xi}
\\
  X\ij&=\sum_{\substack{\ga\in \cH_i,\gb\in \cH_j}} I\gab. \label{xij}
\end{align}

We have $\E I\gab=1/(N-1)$ for any distinct $\ga,\gb\in\cH$. More
generally,
\begin{equation}
  \label{eii}
\E\bigpar{I_{\ga_1\gb_1}\dotsm I_{\ga_\ell\gb_\ell}}
=
\frac{1}{(N-1)(N-3)\dotsm(N-2\ell+1)}=N^{-\ell}\ettoN
\end{equation}
for any fixed $\ell$ and any distinct half-edges
$\ga_1,\gb_1,\dots,\ga_\ell,\gb_\ell$.
Furthermore, the expectation in \eqref{eii}
vanishes if two pairs $\set{\ga_i,\gb_i}$ and
\set{\ga_j,\gb_j} have exactly one common half-edge.

We consider first, as a warm-up, $\E X_i^\ell$ for a single vertex $i$.
\begin{lemma}
  \label{LXi}
Suppose that $\sumdd=O(N)$.
Then,  for every fixed $\ell\ge1$ and all $i$,
\begin{equation}
  \label{magni}
\E X_i^\ell = \E \hX_i^\ell + O\bigpar{N\qqw\gl_i}.
\end{equation}
\end{lemma}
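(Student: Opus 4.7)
The natural approach is via factorial moments, since for a Poisson variable $\hat X_i \sim \Po(\gl_i)$ we have the exact identity $\E(\hat X_i)_k = \gl_i^k$, and the configuration-model expectations \eqref{eii} deliver a correspondingly clean closed form for $\E(X_i)_k$. Once the two factorial moments are compared, the passage from $(X_i)_k$ to $X_i^\ell$ is routine via Stirling numbers of the second kind, which only introduces a constant depending on the fixed $\ell$.

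The plan is: first, using \eqref{xi}, expand $(X_i)_k = k!\binom{X_i}{k}$ as a sum over unordered families of $k$ pairwise-disjoint pairs of half-edges in $\cH_i$, observing that a family with a repeated half-edge contributes $0$ by \eqref{eii}. There are $(d_i)_{2k}/(2^k k!)$ such families, each contributing $1/((N-1)(N-3)\dotsm(N-2k+1))$, so
\begin{equation*}
\E(X_i)_k \;=\; \frac{(d_i)_{2k}}{2^k(N-1)(N-3)\dotsm(N-2k+1)}.
\end{equation*}
Then I would factor out $\gl_i^k = (d_i(d_i-1))^k/(2N)^k = \E(\hat X_i)_k$ and show that
\begin{equation*}
\E(X_i)_k - \E(\hat X_i)_k
\;=\; \gl_i^k\Bigg[\frac{(d_i)_{2k}}{(d_i(d_i-1))^k}\cdot\frac{N^k}{\prod_{j=0}^{k-1}(N-2j-1)} - 1\Bigg]
\;=\; \gl_i^k\cdot O\!\left(\frac{k^2}{N}+\frac{k^2}{d_i}\right)
\end{equation*}
whenever $d_i\ge 2k$, by expanding each bracketed product factor and using that the pointwise deviation from $1$ is $O(k/d_i)$ and $O(k/N)$ respectively.

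The key bound follows because $\gl_i/d_i = (d_i-1)/(2N) = O(N\qqw)$ by \eqref{dN}, and $\gl_i = O(1)$ by \eqref{glO}. Hence $\gl_i^k(k^2/N + k^2/d_i)$ is at most a constant (depending on $k$) times $\gl_i N\qqw$, uniformly. I would dispose of the small-$d_i$ case ($d_i<2k$, fixed $k$) separately: then $\E(X_i)_k=0$ while $\gl_i^k = O(N^{-k})$, and $\gl_i\ge 1/N$ (provided $d_i\ge 2$; otherwise both sides vanish trivially), so $\gl_i^k \le \gl_i\cdot O(N^{-(k-1)}) = O(\gl_i N\qqw)$ as well. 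Combined,
\begin{equation*}
\E(X_i)_k - \E(\hat X_i)_k = O\bigpar{\gl_i N\qqw}
\end{equation*}
for every fixed $k\ge 1$.

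Finally, writing $X_i^\ell = \sum_{k=1}^{\ell} S(\ell,k)(X_i)_k$ (and the same for $\hat X_i$), where $S(\ell,k)$ are Stirling numbers of the second kind, and summing the factorial-moment estimates yields \eqref{magni}. The main obstacle is really only the bookkeeping in the small-$d_i$ boundary case and in controlling the product $\prod_{j=0}^{k-1}(d_i-2j)(d_i-2j-1)/(d_i(d_i-1))$; the crucial algebraic observation that makes everything work is the identity $\gl_i/d_i = (d_i-1)/(2N) = O(N\qqw)$, which converts the seemingly dangerous $1/d_i$ terms into the desired $N\qqw$ factor.
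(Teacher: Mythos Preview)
Your proposal is correct and follows essentially the same route as the paper: compute $\E(X_i)_k$ exactly via \eqref{eii}, compare it to $\E(\hat X_i)_k=\gl_i^k$ using $d_i/N=O(N\qqw)$ and $\gl_i=O(1)$, and then pass to ordinary moments via the Stirling-number relations. The only cosmetic difference is that the paper writes $(d_i)_{2k}=(d_i(d_i-1))^k+O(d_i^{2k-1})$ uniformly for $d_i\ge2$, which absorbs your small-$d_i$ boundary case into the same line rather than treating it separately.
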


\begin{proof}
We may assume that $d_i\ge2$, since the case $d_i\le1$ is trivial with
$\gl_i=0$ and
$X_i=\hX_i=0$.

Since there are $\binom {d_i}2$ possible loops at $i$, 
\eqref{eii} yields
\begin{equation}\label{emm1}
  \E X_i = \binom{d_i}2\frac1{N-1}
= \gl_i\ettoN.
\end{equation}
Similarly, for any fixed $\ell\ge2$, 
there are $2^{-\ell}(d_i)_{2\ell}$ ways to select a sequence of
$\ell$ disjoint (unordered) pairs of half-edges at $i$, and thus
by \eqref{eii},
using \eqref{gli}, \eqref{dN}, \eqref{glO} and \eqref{hX},
\begin{equation}\label{emm2}
  \begin{split}
	\E (X_i)_\ell
&=
\frac{(d_i)_{2\ell}}{2^\ell N^\ell}\ettoN
=\frac{(d_i(d_1-1))^{\ell}+O\bigpar{d_i^{2\ell-1}}}{2^\ell N^\ell}\ettoN
\\&
=\gl_i^\ell\ettoN +O\Bigpar{\frac{d_i}{N}\gl_i^{\ell-1}}
\\&
=\gl_i^\ell+O\bigpar{N\qw\gl_i^{\ell}}+O\bigpar{N\qqw\gl_i^{\ell-1}}
\\&
=\gl_i^\ell+O\bigpar{N\qqw\gl_i}
\\&
=\E(\hX_i)_\ell+O\bigpar{N\qqw\gl_i}.
  \end{split}
\raisetag{\baselineskip}
\end{equation}
The conclusion \eqref{magni} now follows from \eqref{emm1}--\eqref{emm2}
and the standard relations between moments and factorial moments, 
together with \eqref{glO}.
  \end{proof}

We next consider moments of $Y\ij$, where $i\neq j$.

\begin{lemma}
  \label{LYij}
Suppose that $\sumdd=O(N)$.
Then,  for every fixed $\ell\ge1$ and all $i\neq j$,
\begin{equation}
  \label{magnij}
\E Y\ij^\ell = \E \hY\ij^\ell + O\bigpar{N\qqw\gl\ij^2}.
\end{equation}
\end{lemma}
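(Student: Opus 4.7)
The plan is to compute $\E Y\ij^\ell$ and $\E\hY\ij^\ell$ by expressing both as linear combinations of factorial moments of $X\ij$ and $\hX\ij$ respectively, in the spirit of \refL{LXi}. We may assume $d_i,d_j\ge2$, since otherwise $X\ij\le 1$ and $\gl\ij=0$, whence $Y\ij=\hY\ij=0$ identically and \eqref{magnij} is trivial.

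\emph{Step 1 (factorial moments of $X\ij$).} Expanding $(X\ij)_k$ via \eqref{xij} as a sum over ordered $k$-tuples of distinct index pairs $(\ga_s,\gb_s)_{s=1}^k$ with $\ga_s\in\cH_i$ and $\gb_s\in\cH_j$, the remark after \eqref{eii} shows that only tuples with all $\ga_s$ pairwise distinct and all $\gb_s$ pairwise distinct contribute. Counting such tuples and applying \eqref{eii} gives
\begin{equation*}
\E(X\ij)_k = \frac{(d_i)_k(d_j)_k}{(N-1)(N-3)\dotsm(N-2k+1)} = \frac{(d_i)_k(d_j)_k}{N^k}\bigpar{1+O(N\qw)}.
\end{equation*}

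\emph{Step 2 (comparison with $\gl\ij^k$).} The key estimate is that for every fixed $k\ge2$,
\begin{equation*}
\E(X\ij)_k = \gl\ij^k + O\bigpar{N\qqw\gl\ij^2}.
\end{equation*}
For $k=2$ this is immediate, since $(d_i)_2(d_j)_2 = d_i(d_i-1)d_j(d_j-1) = N^2\gl\ij^2$ exactly, leaving only the $O(N\qw)$ denominator correction. For $k\ge3$, a short expansion of the falling factorial shows that $(d_i)_k - \bigpar{d_i(d_i-1)}^{k/2} = O(d_i^{k-1})$ uniformly in $d_i\ge2$ (the leading $d_i^k$ terms agree); multiplying out,
\begin{equation*}
(d_i)_k(d_j)_k = \bigpar{d_i(d_i-1)}^{k/2}\bigpar{d_j(d_j-1)}^{k/2} + O\bigpar{(d_id_j)^{k-1}(d_i+d_j)}.
\end{equation*}
Dividing by $N^k$ and using $\maxd=O(N\qq)$ from \eqref{dN} to bound $(d_i+d_j)/N=O(N\qqw)$, the error becomes $O\bigpar{\gl\ij^{k-1} N\qqw}$, which, together with $\gl\ij=O(1)$ from \eqref{judd}, gives $O\bigpar{\gl\ij^2 N\qqw}$.

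\emph{Step 3 (assembly).} Since $Y=\binom X2$ vanishes at $X\in\{0,1\}$, so does $Y^\ell$; being a polynomial of degree $2\ell$ in $X$ vanishing at $0$ and $1$, it lies in the linear span of $\{(X)_k: 2\le k\le 2\ell\}$, a basis for that subspace. Writing $Y^\ell = \sum_{k=2}^{2\ell} a_k (X)_k$ with coefficients $a_k$ depending only on $\ell$, and using $\E(\hX\ij)_k = \gl\ij^k$ for the Poisson \rv, we conclude
\begin{equation*}
\E Y\ij^\ell - \E\hY\ij^\ell = \sum_{k=2}^{2\ell} a_k\bigpar{\E(X\ij)_k - \gl\ij^k} = O\bigpar{N\qqw\gl\ij^2}.
\end{equation*}
The main obstacle is the $k\ge3$ case of Step~2: the natural error size is $O(\gl\ij^{k-1}N\qqw)$, which reaches the required $O(\gl\ij^2 N\qqw)$ only because $\gl\ij$ is bounded; and the exact algebraic identity $(d_i)_2 = d_i(d_i-1)$ at $k=2$ is what prevents an unwanted term of order $N\qqw$ without a $\gl\ij^2$ factor.
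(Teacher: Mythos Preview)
Your proof is correct and takes a genuinely different, more direct route than the paper. The paper expands $(Y\ij)_\ell$ into indicator products and then introduces two layers of auxiliary variables: first $\tI\gab\sim\Be(1/N)$ (i.i.d.), then $\uI\gab\sim\Po(1/N)$, yielding $\uX\ij\sim\Po(d_id_j/N)$; it finally passes from the mean $d_id_j/N$ to $\gl\ij$ via \refL{Lpom} and a derivative bound on $h_\ell$. Your argument bypasses all of this by exploiting the clean closed form $\E(X\ij)_k=(d_i)_k(d_j)_k\prod_{r=1}^k(N-2r+1)^{-1}$ (available precisely because collisions among half-edges kill the expectation), comparing directly to $\gl\ij^k=\E(\hX\ij)_k$, and then using the neat observation that $Y^\ell$ lies in the span of $(X)_k$ for $k\ge2$. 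The only point where you are slightly terse is the implicit use of $d_id_j/N\le 2\gl\ij$ (valid for $d_i,d_j\ge2$) when converting the error $O\bigl((d_id_j)^{k-1}(d_i+d_j)/N^k\bigr)$ into $O(\gl\ij^{k-1}N\qqw)$; and the bound $\gl\ij=O(1)$ comes from \eqref{glO} rather than \eqref{judd}. What your approach buys is brevity and the avoidance of \refL{Lpom}; what the paper's approach buys is a template that generalises more readily to the mixed products needed later in the proof of \refL{LZ}, where the ``good/bad product'' decomposition reuses the same comparison ideas.
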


\begin{proof}
We may assume $d_i,d_j\ge2$ since otherwise $\gl\ij=0$
and $Y\ij=\hY\ij=0$.

An unordered pair of two disjoint pairs from $\cH_i\times\cH_j$ can be
chosen in 
$\frac12d_id_j(d_i-1)(d_j-1)$ ways, and thus, by 
\eqref{eii}, \eqref{glij} and \eqref{ey},
\begin{equation}
  \label{nord}
\E Y\ij =\frac{d_id_j(d_i-1)(d_j-1)}{2(N-1)(N-3)}
=\frac{\gl\ij^2}2\ettoN
=\E\hY\ij\, \ettoN.
\end{equation}
Let $\ell\ge2$. Then $(Y\ij)_\ell$ is a sum
\begin{equation}
  \label{emma}
\sum_{\substack{\ga_k,\ga'_k\in\cH_i:\ga_k<\ga'_k\\\gb_k,\gb_k'\in\cH_j}}
\prod_{k=1}^\ell \lrpar{I_{\ga_k\gb_k}I_{\ga'_k\gb'_k}}
\end{equation}
where we only sum over terms such that
the $\ell$ pairs of pairs $\set{\set{\ga_k,\gb_k},\set{\ga_k',\gb_k'}}$
are distinct.

We approximate $\E(Y\ij)_\ell$ in several steps. First,
let $\tI\gab$, for  $\ga\in\cH_i$ and $\gb\in\cH_j$, 
be independent indicator variables with $\P(\tI\gab=1)=1/N$.
(In other words, $\tI\gab$ are \iid{} $\Be(1/N)$.)
Let, in analogy with \eqref{xij},
\begin{align}
  \tX\ij&\=\sum_{\substack{\ga\in \cH_i,\gb\in \cH_j}} \tI\gab, \label{txij}
\end{align}
and let $\tY\ij\=\binom{\tX\ij}2$. Then, $(\tY\ij)_\ell$ is a sum similar to
\eqref{emma}, with $I\gab$ replaced by $\tI\gab$.
Note that \eqref{emma} is a sum of terms that are products of $2\ell$
indicators; however, there may be repetitions among the indicators, so each
term is a product of $r$ distinct indicators where $r\le2\ell$. 
Since we assume $\ell\ge2$, and the pairs 
$\set{\set{\ga_k,\gb_k},\set{\ga_k',\gb_k'}}$ are distinct, $r\ge3$ for each
term. 

Taking expectations and using \eqref{eii}, we see that the terms in
\eqref{emma} where all occuring pairs $\set{\ga_k,\gb_k}$ are distinct yield
the same contributions to $\E(Y\ij)_\ell$ and $\E(\tY\ij)_\ell$, apart from
a factor $\ettoN$.

However, there are also terms containing factors $I\gab$ and $I_{\ga'\gb'}$
where $\ga=\ga'$ or $\gb=\gb'$ (but not both). Such terms vanish identically
for $(Y\ij)_\ell$, but the corresponding terms for $(\tY\ij)_\ell$ do not.
The number of such terms for a given $r\le2\ell$ is
$O(d_i^{r-1}d_j^r+d_i^rd_j^{r-1})$ and thus their contribution to
$\E(\tY\ij)_\ell$ is, using \eqref{eii} and \eqref{dN},
\begin{equation}
O\parfrac{d_i^{r-1}d_j^r+d_i^rd_j^{r-1}}{N^r}
=
O\lrpar{\frac{d_j+d_i}{N}\gl\ij^{r-1}}
=
O\lrpar{{N}\qqw\gl\ij^{r-1}}.
\end{equation}
Summing over $3\le r\le2\ell$, this yields, using \eqref{glO},
a total contribution $O\lrpar{{N}\qqw\gl\ij^{2}}$. 
Consequently, we have
\begin{equation}
  \label{a8}
\E(Y\ij)_\ell
=\E(\tY\ij)_\ell\ettoN + 
O\lrpar{{N}\qqw\gl\ij^{2}}.
\end{equation}

Next, replace the \iid{} indicators $\tI\gab$ by \iid{} Poisson variables
$\uI\gab\sim\Po(1/N)$ with the same mean, and let,
in analogy with \eqref{xij} and \eqref{txij},
\begin{align}
  \uX\ij&\=\sum_{\substack{\ga\in \cH_i,\gb\in \cH_j}} \uI\gab
\sim\Po\parfrac{d_id_j}{N}, \label{uxij}
\end{align}
and let $\uY\ij\=\binom{\uX\ij}2$. Then, $(\uY\ij)_\ell$ can be expanded as
a sum similar to 
\eqref{emma}, with $I\gab$ replaced by $\uI\gab$. We take the expectation
and note that the only difference from $\E(\tY\ij)_\ell$ is for terms where
some $\uI\gab$ is repeated. We have, for any fixed $k\ge1$,
\begin{equation}
  \E\uI\gab^k=\frac1N+O(N\qww)
=\frac1N\ettoN,
\end{equation}
while
\begin{equation}
  \E \tI\gab^k=\E\tI\gab=\frac1N.
\end{equation}
Hence, for each term, the difference, if any, is by a factor $\ettoNx$,
and thus
\begin{equation}
  \label{ba}
\E(\uY\ij)_\ell=\E(\tY\ij)_\ell\ettoN.
\end{equation}

Note that we here use $\uY\ij=\binom{\uX\ij}2$, where
$\uX\ij\sim\Po(d_id_j/N)$ has a mean 
$\ugl\ij\=d_id_j/N$
that differs from $\E\hX\ij=\gl\ij$
given by \eqref{glij}.
We have
\begin{equation}\label{a9}
  \ugl\ij \ge \gl\ij \ge \frac{(d_i-1)(d_j-1)}{N}>\ugl\ij -  \frac{d_i+d_j}{N}.
\end{equation}
We use \refL{Lpom} and note that the lemma implies that $h_\ell'(\gl)=O(\gl^2)$
for each $\ell\ge2$ and $\gl=O(1)$. Hence, by \eqref{a9} and
\eqref{dN}--\eqref{glO}, 
\begin{equation}\label{bax}
  \begin{split}
\E(\uY\ij)_\ell-\E(\hY\ij)_\ell
&=h_\ell(\ugl\ij)-h_\ell(\gl\ij)
=O\bigpar{\ugl\ij^2(\ugl\ij-\gl\ij)}	
\\&
=O\Bigpar{\frac{d_i+d_j}{N}\ugl\ij^2}
=O\Bigpar{N\qqw\gl\ij^2}.
  \end{split}
\end{equation}
Finally, \eqref{a8}, \eqref{ba} and \eqref{bax} yield, for each $\ell\ge2$,
\begin{equation}
  \label{sw}
\E(Y\ij)_\ell  = \E(\hY\ij)_\ell \ettoN+O\Bigpar{N\qqw\gl\ij^2}.
\end{equation}
By \eqref{nord}, this holds for $\ell=1$ too.
By \eqref{glO} and \refL{Lpom}, for each $\ell\ge1$,
$\E(\hY\ij)_\ell=O(\gl\ij^2)$, and thus \eqref{sw} can be written
\begin{equation}
  \label{sw0}
\E(Y\ij)_\ell  = \E(\hY\ij)_\ell +O\Bigpar{N\qqw\gl\ij^2},
\end{equation}
for each fixed $\ell\ge1$.
The conclusion now follows, as in \refL{LXi}, by the relations between
moments and factorial moments, again using the bound \eqref{glO}.
\end{proof}

In particular, note that Lemmas \refand{LXi}{LYij} 
together with \refL{Lpom} and \eqref{glO} imply the bounds, for every
fixed $\ell\ge1$, 
\begin{align}
  \label{erikai}
\E X_i^\ell+ \E \hX_i^\ell 
&=O\bigpar{\gl_i+\gl_i^\ell+N\qqw\gl_i}=O(\gl_i).
\\
  \label{erikaij}
\E Y\ij^\ell+ \E \hY\ij^\ell 
&=O\bigpar{\gl\ij^2+\gl\ij^{2\ell}+N\qqw\gl\ij^2}=O(\gl\ij^2).
\end{align}

\begin{proof}[Proof of \refL{LZ}]

We uncouple the  terms in \eqref{z} by letting $(I\gab\xij)\gagb$ be
independent copies of $(I\gab)\gagb$, for $1\le i,j\le n$, and defining, in
analogy with \eqref{xi}--\eqref{xij} and \eqref{z},
\begin{align}
\uc X_i&\=\sum_{\substack{\ga,\gb\in \cH_i:\;\ga<\gb}} I\gab\xii, \label{ucxi}
\\
\uc X\ij&\=\sum_{\substack{\ga\in \cH_i,\gb\in \cH_j}} I\gab\xij, \label{ucxij}
\\
\uc Y\ij & \=\binom{\uc X\ij}{2},
\\
\uc Z&\=\sumin \uc X_i + \sumij \uc Y\ij. \label{ucz}
\end{align}
Note that the summands in \eqref{ucxi} are not independent; they have the same
structure as $(I_{\ga\gb})_{\ga,\gb\in\cH_i}$ and thus $\uc X_i\eqd X_i$, 
and similarly
$X\ij\eqd X\ij$. However, different sums $\uc X_i$ and $\uc X\ij$ are
independent (unlike $X_i$ and $X\ij$).

We begin by comparing $\E\uc Z^m$ and $\E\hZ^m$. Since the terms in
\eqref{ucz} are independent, the moment $\E \uc Z^m$ can be written as a
certain polynomial $g_m\bigpar{\E X_i^\ell,\E Y\ij^\ell:i,j\in\nn,\, \ell\le m}$
in the moments
$\E(\uc X_i)^\ell=\E X_i^\ell$
and
$\E(\uc X\ij)^\ell=\E X\ij^\ell$ for $1\le \ell\le m$ and $i,j\in\nn$.

By \eqref{hz}, $\E\hZ^m$ can be expressed in the same way as 
$g_m\bigpar{\E \hX_i^\ell,\E \hY\ij^\ell:i,j\in\nn,\, \ell\le m}$ for the same
polynomial $g_m$. It follows that
\begin{equation}\label{jui}
  \E\uc Z^m - \E\hZ^m 
= \sum_{\ell=1}^m \sum_i\bigpar{\E X_i^\ell-\E	\hX_i^\ell}R_{\ell i}
+
\sum_{\ell=1}^m\sum_{i<j}\bigpar{\E Y\ij^\ell-\E\hY\ij^\ell}R_{\ell ij}
\end{equation}
for some polynomials $R_{\ell i}$ and $R_{\ell ij}$ in the moments 
$\E X_i^k$, $\E\hX_i^k$, $\E Y\ij^k$, $\E\hY\ij^k$, for $k\le m$;
it is easily seen from \eqref{erikai}--\eqref{erikaij} and \eqref{judd} that
\begin{equation}
  R_{\ell i},R_{\ell ij} 
= O\lrpar{\sum_{\nu=1}^m\Bigpar{\sum_i\gl_i+\sum_{i<j}\gl\ij^2}^\nu}
=O(1)
\end{equation}
uniformly in $i,j\in\nn$ and $\ell\le m$.
Hence, \eqref{jui} yields, together with \eqref{magni}, \eqref{magnij}
and \eqref{judd},
\begin{equation}\label{zuc1}
  \E\uc Z^m - \E\hZ^m 
= O\biggpar{ \sum_i N\qqw\gl_i+\sum_{i<j} N\qqw\gl\ij^2}
=O\bigpar{N\qqw}.
\end{equation}

It remains to compare $\E Z^m$ and $\E\uc Z^m$.
By \eqref{z} and \eqref{xi}--\eqref{xij}, $Z^m$ can be expanded as a sum 
of certain products
\begin{equation}\label{iabab}
  I_{\ga_1\gb_1}\dotsm I_{\ga_\ell\gb_\ell}
\end{equation}
where $1\le \ell\le 2m$ and we may assume that the pairs
\set{\ga_1,\gb_1},\dots,\set{\ga_\ell,\gb_\ell} are distinct. 
(Some products \eqref{iabab} may be repeated in $Z^m$, but only $O(1)$ times.)
Furthermore,
by \eqref{ucxi}--\eqref{ucz},
$\uc Z^m$ is the sum of the corresponding products
\begin{equation}\label{uciabab}
\uc I_{\ga_1\gb_1}\dotsm \uc I_{\ga_\ell\gb_\ell},
\end{equation}
where $\uc I\gab\=I\gab\xij$ when $\ga\in\cH_i$ and $\gb\in\cH_j$.

We say that a product \eqref{iabab} or \eqref{uciabab}
is \emph{bad} if 
it contains two factors $I_{\ga_\nu\gb_\nu}$ and $I_{\ga_\mu\gb_\mu}$
such that the pairs $\set{\ga_\nu\gb_\nu}$ and $\set{\ga_\mu\gb_\mu}$
contain a common index, say $\ga_\nu=\ga_\mu$, and furthermore the two
remaining indices, $\gb_\nu$ and $\gb_\mu$, say, are half-edges belonging to
different vertices, \ie, $\gb_\nu\in\cH_i$ and $\gb_\mu\in\cH_j$ with $i\neq
j$. Otherwise we say that the product is \emph{good}.
(Note that a good product may contain factors
$I_{\ga_\nu\gb_\nu}$ and $I_{\ga_\mu\gb_\mu}$
with $\ga_\nu=\ga_\mu$ as long as $\gb_\nu$ and $\gb_\mu$ belong to the
same vertex.)
It follows from \eqref{eii} that for each good product, the corresponding
contributions to $\E Z^m$ and $\E\uc Z^m$ differ only by a factor $\ettoN$.
For a bad product, however, the contribution to $\E Z^m$ is 0.
We thus have to estimate the contribution to $\E \uc Z^m$ of the bad products.

We define the \emph{support} of a product \eqref{uciabab} as the multigraph
with vertex set $\nn$ and edge set \set{\ga_\nu\gb_\nu:1\le\nu\le \ell},
\ie, the multigraph obtained by forming edges from the pairs of half-edges
appearing as indices in the product.
If $F$ is the support of \eqref{uciabab}, then $F$ thus has $\ell$ edges
(possibly including loops). Furthermore, 
it follows from \eqref{ucxi}--\eqref{ucz} that
every edge in $F$ that is not a
loop has at least one edge parallel to it. Hence, a vertex $i$
in $F$ with non-zero degree has degree at least 2. 
In other words,  if we denote the vertex
degrees in $F$ by $\gd_1,\dots,\gd_n$, then $\gd_i=0$ or $\gd_i\ge2$.
Moreover, if
\eqref{uciabab} is bad with, say, $\ga_\nu=\ga_\mu\in\cH_i$, then there are
edges in $F$ from $i$ to at least two vertices $j$ and $k$ (one of which
may equal $i$), and thus the degree $\gd_i\ge4$.

Let $F$ be a multigraph with vertex set $\nn$ and $\ell$ edges, and denote
again its vertex degrees by $\gd_1,\dots,\gd_n$. Thus $\sum_i\gd_i=2\ell$.
Let $S_F$ be the contribution to $\E\uc Z^m$ from bad products \eqref{iabab}
with support $F$. A bad product has some half-edge repeated, and if this
belongs to $\cH_i$, there are $O(d_i^{\gd_i-1}\prod_{j\neq i}d_j^{\gd_j})$
choices for the product. Furthermore, as just shown,
this can only occur for $i$ with $\gd_i\ge4$. Since each product yields a
contribution $O(N^{-\ell})$ by \eqref{eii}, we have,
using
$2\ell=\sum_i\gd_i$ and \eqref{dN} together with the fact that
$\gd_j\neq1$,
\begin{equation}\label{sf}
  \begin{split}
S_F&
=O\lrpar{N^{-\ell}\sum_{i:\gd_i\ge4}d_i^{\gd_i-1}\prod_{j\neq i}d_j^{\gd_j}}
\\&
=O\lrpar{N^{-1/2}\sum_{i:\gd_i\ge4}\parfrac{d_i}{N\qq}^{\gd_i-1}
 \prod_{j\neq i}\parfrac{d_j}{N\qq}^{\gd_j}}
\\&
=O\lrpar{N^{-1/2} \prod_{j:\gd_j>0}\parfrac{d_j}{N\qq}^{2}}.
  \end{split}
\end{equation}

Summing over all possible $F$, and recalling that $\ell\le 2m$,
it follows that the total contribution to $\E\uc Z^m$ from bad products is 
\begin{equation}\label{sumfs}
\sum_F S_F
=O\lrpar{N^{-1/2}\sum_F \prod_{j:\gd_j>0}\frac{d_j^2}{N}}
.
\end{equation}
For each support $F$, the set $\set{j:\gd_j>0}=\set{j:\gd_j\ge2}$ has size
at most $\ell\le 2m$, and for
each choice of this set, there are $O(1)$ possible $F$.
Hence,
\begin{equation*}
  \begin{split}
\sum_F \prod_{j:\gd_j>0}\frac{d_j^2}{N}
=O\lrpar{\sum_{k=1}^{2m}\sum_{j_1<\dots<j_k}\prod_{i=1}^k 
 \frac{d_{j_i}^2}{N}}
=O\lrpar{\sum_{k=1}^{2m} \biggpar{\sum_{j=1}^n\frac{d_j^2}{N}}^k}
=O(1),
  \end{split}
\end{equation*}
and \eqref{sumfs} yields
\begin{equation}
\sum_F S_F =O\bigpar{N^{-1/2}}.
\end{equation}

Summarizing, the argument above yields
\begin{equation}\label{zuc2}
  \E Z^m = \E \uc Z^m\ettoN + O\bigpar{N\qqw}
= \E \uc Z^m + O\bigpar{N\qqw},
\end{equation}
since $\E \uc Z^m=O(1)$, \eg{} by \eqref{zuc1} and \refL{Lcarl}. (Or by
arguing similarly as above, summing over supports.)

The lemma follows from \eqref{zuc1} and \eqref{zuc2}.
\end{proof}

\section{Proof of theorems \ref{T0} and \ref{T1}}\label{Spf}

We first assume that $\sumdd=O(N)$ and prove the following, more precise,
statement. 

\begin{lemma}
  \label{Ldtv}
Suppose that $\sumdd=O(N)$ and \Ntoo. Then
  $\dtv(Z,\hZ)\to0$.
\end{lemma}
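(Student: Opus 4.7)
The plan is to combine the moment estimate of \refL{LZ} with a subsequence method-of-moments argument in order to upgrade coincidence of moments into convergence in total variation. All the combinatorial content has already been packed into \refL{LZ}; what remains is essentially soft, and it makes essential use of the fact that both $Z$ and $\hZ$ are non-negative integer-valued.

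By \refL{LZ}, for every fixed $m\ge1$, $\E Z^m-\E\hZ^m=O(N\qqw)\to0$, and by \refL{Lcarl}(ii) together with \eqref{judd}, $(\E\hZ^m)^{1/m}=O(m^2)$ uniformly in $n$, so $\E Z^m$ is likewise uniformly bounded for each $m$. Given any subsequence, a diagonal extraction then yields a further subsequence $(n_{k_j})$ along which $\E\hZ^m\to\mu_m$ for every $m\ge1$, with limit moments satisfying $\mu_m^{1/m}=O(m^2)$. Since the relevant variables are non-negative, the appropriate uniqueness criterion is the Stieltjes form of Carleman's condition, $\sum_{m\ge1}\mu_m^{-1/(2m)}=\infty$, which holds here because $\mu_m^{1/(2m)}=O(m)$ forces $\mu_m^{-1/(2m)}\ge c/m$ for some $c>0$. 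Hence $(\mu_m)$ is the moment sequence of a unique probability distribution $W$ on $[0,\infty)$, necessarily supported on $\bbZgeo$ as a weak limit of $\bbZgeo$-valued random variables.

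The method of moments then gives $\hZ\dto W$ along $(n_{k_j})$, and, using \refL{LZ} once more to conclude $\E Z^m\to\mu_m$, also $Z\dto W$. For $\bbZgeo$-valued random variables, convergence in distribution to a $\bbZgeo$-valued limit is equivalent to pointwise convergence of the probability mass functions, which by Scheffe's lemma (with respect to counting measure) is equivalent to convergence in total variation. Hence $\dtv(Z,W)\to0$ and $\dtv(\hZ,W)\to0$ along the subsequence, and the triangle inequality yields $\dtv(Z,\hZ)\to0$ there. Since every subsequence of the original sequence contains such a further subsequence, the full sequence satisfies $\dtv(Z,\hZ)\to0$.

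The main conceptual obstacle is that the distribution of $\hZ$ itself varies with $n$, so there is no single limit law to target; the subsequence argument circumvents this. The analytic point that makes it work is that the bound $(\E\hZ^m)^{1/m}=O(m^2)$ is just strong enough for Stieltjes---but not Hamburger---Carleman, which is exactly why the non-negativity of $Z$ and $\hZ$ is crucial.
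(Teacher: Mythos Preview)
Your proof is correct and follows essentially the same route as the paper's: both use \refL{LZ} and \refL{Lcarl}(ii) together with \eqref{judd} to control moments, pass to a subsequence, invoke the Stieltjes form of Carleman's condition (since only $\mu_m^{1/m}=O(m^2)$ is available) to justify the method of moments, and then exploit that $Z$ and $\hZ$ are $\bbZgeo$-valued to upgrade weak convergence to total variation via the triangle inequality. The paper frames the subsequence step as a proof by contradiction and extracts the limit $\zoo$ via tightness of $\hZ$ rather than by diagonal extraction on moments, but these are cosmetic differences; if anything, your explicit appeal to Scheff\'e's lemma for the step $\dtv(\hZ,W)\to0$ is slightly more careful than the paper's presentation.
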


\begin{proof}
  We argue by contradiction. Suppose that the conclusion fails. Then there
  is a sequence of sequences $(d_i)_1^n$ 
with
  $N\=\sumd\to\infty$ and $\sumdd=O(N)$, but $\dtv(Z,\hZ)\ge\eps$ for some
  $\eps>0$. 
Note that then \eqref{judd}  holds.

By \refL{Lcarl}(ii) and \eqref{judd}, $\E\hZ^m=O(1)$ for each $m$.
In particular, the  sequence $\hZ$ is tight, and by  selecting a subsequence
we may assume that $\hZ\dto \zoo$ for some random variable $\zoo$.
Furthermore, the estimate $\E\hZ^m=O(1)$ for each $m$ implies that
$\hZ^m$ is uniformly integrable for each $m\ge1$, and thus
\begin{equation}\label{hzoom}
  \E \hZ^m\to\zoo^m,
\end{equation}
see \eg{} 
\cite[Theorems 5.4.2 and 5.5.9]{Gut}.
By \refL{LZ}, we thus also have
\begin{equation}\label{zoom}
  \E Z^m\to\zoo^m
\end{equation}
for each $m\ge1$.
Furthermore, by \eqref{lc2} and \eqref{hzoom},
\begin{equation}\label{zoom2}
\bigpar{ \E \zoo^m}^{1/m}=O(m^2).
\end{equation}
We can now apply the method of moments and conclude from \eqref{zoom}
that $Z\dto \zoo$. We justify the use by the method of moments by \eqref{zoom2},
which implies that
\begin{equation}
  \label{carl+}
\sum_m \bigpar{ \E \zoo^m}^{-1/2m}=\infty; 
\end{equation}
since $\zoo\ge0$, this weaker form of the usual Carleman criterion shows
that the 
distribution of $\zoo$ is determined (among all distributions on $\ooo$)
by its moments, and thus (since also $Z\ge0$)
the method of moment applies, see \eg{} \cite[Section 4.10]{Gut}.

Hence $\hZ\dto\zoo$ and $Z\dto\zoo$, and thus
\begin{equation}
  \dtv(Z,\hZ)\le\dtv(Z,\zoo)+\dtv(\hZ,\zoo)\to0,
\end{equation}
a contradiction which proves the lemma.
\end{proof}

\begin{remark}\label{Rtorsten}
  Note that $\E e^{t\hY\ij}=\infty$ for every $t>0$ when $\gl\ij>0$. Hence,
$\hZ$ does not have a finite \mgf. Similarly, it is possible that $\E
e^{t\zoo}=\infty$;
consider, for example, the case $d_1=d_2\sim N\qq$ when $\gl_{12}\to1$ and
$\zoo\ge\binom{\hX}2$ with $\hX\sim\Po(1)$.
In this case, furthermore, 
by Minkowski's inequality,
\begin{equation}
  \begin{split}
\bigpar{\E\zoo^m}^{1/m}
&\ge\frac12\bigpar{\E(\hX^2-\hX)^m}^{1/m}
\ge\frac12\bigpar{\E \hX^{2m}}^{1/m} - \frac12\bigpar{\E \hX^{m}}^{1/m}	
\\&\sim
\frac12\Bigparfrac{2m}{e\log m}^2
=\frac{2m^2}{e^2\log^2m}
  \end{split}
\end{equation}
using simple estimates
for the 
moments $\E\hX^m$ when $\hX\sim\Po(1)$, which are the
Bell numbers.
(Or by more precise asymptotics in \eg{} \cite[Proposition VIII.3]{Flajolet} and
\cite[\S26.7]{NIST}.) 
Hence, in this case,
$\sum_m \bigpar{\E \zoo^m}^{-1/m}\allowbreak<\infty$; in other words,
$\zoo$ does not satisfy
the usual Carleman criterion 
$\sum_m \bigpar{\E \zoo^m}^{-1/m}\allowbreak=\infty$ for the distribution of
to be determined by its moments.
However, since we here deal with non-negative random variables, we can use
the weaker condition \eqref{carl+}. 
(This weaker version is well-known, and follows from the standard version by
considering the square root $\pm\zoo$ with random sign, independent of
$\zoo$.
Alternatively, we may observe that \eqref{zoom} implies $\E(\pm \sqrt
Z)^k\to\E(\pm\sqrt\zoo)^k$ for all $k\ge0$, where the moments trivially
vanish when $k$ is odd; since $\pm\sqrt{\zoo}$ has a finite \mgf{} by
\eqref{lc1} and Fatou's lemma, the usual sufficient condition for the method
of moments yields $\pm\sqrt{Z}\dto\pm\sqrt{\zoo}$, and thus $Z\dto\zoo$.)
\end{remark}

\begin{proof}[Proof of Theorems \ref{T0} and \ref{T1}]
In the case $\sumdd=O(N)$, \refT{T1} follows from \refL{Ldtv}, since
\begin{equation}\label{asa}
  \begin{split}
\P(\hZ=0)
&=\P\bigpar{\hX_i= \hY\ij=0 \text{ for all }i,j} 
=\prod_i\P(\hX_i=0)\prod_{i<j}\P(\hX\ij\le1)
\\
&=\prod_i e^{-\gl_i}\prod_{i<j}(1+\gl\ij)e^{-\gl\ij}.
  \end{split}
\raisetag{1.4\baselineskip}
\end{equation}
Furthermore, 
$\gl\ij-\log(1+\gl\ij)=O(\gl\ij^2)$, so it follows from this and 
\eqref{judd} that 
$\liminf_\ntoo\P\bigpar{\gndx \text{ is simple}}	>0$, verifying \refT{T0}
in this case.

It remains (by considering subsequences) only to consider the case when
$\sumdd/N\to\infty$. Since then
\begin{equation}\label{asb}
  \sum_i\gl_i=\frac{\sumdd-\sumd}{2N}
=
\frac{\sumdd}{2N}-\frac12\to\infty, 
\end{equation}
it follows from \eqref{asa} that $\P(\hZ=0)\to0$, and it remains to show
that $\P(Z=0)\to0$. We do this by the method used in \cite{SJ195}  for this
case. We fix $A>1$ and split vertices by replacing some $d_j$ by $d_j-1$
and a new vertex $n+1$ with $d_{n+1}=1$, repeating until the new degree
sequence, $(\hd_i)_1^{\hn}$ say, satisfies $\sumi\hd_i^2\le AN$. (Note that
the number $N$ of half-edges is unchanged.)
Then, as $N\to\infty$, see \cite{SJ195} for details,
$\sumi\hd_i^2\sim AN$ and, denoting the new random multigraph by $\hG$ and
using \refL{Ldtv} together with \eqref{asa} and \eqref{asb} on $\hG$,
\begin{equation*}
  \begin{split}
\P\bigpar{\gnd\text{ is simple}}	
&\le
\P\bigpar{\hG\text{ is simple}}	
\le 
\exp\Bigpar{-\sum_i\frac{\hd_i(\hd_i-1)}{2N}}+o(1)
\\&
=\exp\Bigpar{-\frac{\sum_i\hd_i^2}{2N}+\frac12}+o(1)
\to \exp\Bigpar{-\frac{A-1}2}.
  \end{split}
\end{equation*}
Since $A$ is arbitrary, it follows that 
$\P\bigpar{\gnd\text{ is simple}}=\P(Z=0)\to0$ in this case, which completes
the proof.	   
\end{proof}

\section{Bipartite graphs}\label{Sbi}

A similar result for bipartite graphs has been proved by
\citet{BlaSta}; see 
\eg{} 
\cite{BBK},
\cite{McKayrect},
\cite{GreenhillMW} for earlier results.
(These results are often stated in an equivalent form about $0$-$1$
matrices.)
We suppose we are given degree sequences $(s_i)_1^{n'}$ and $(t_j)_1^{n''}$
for the two parts, with $N:=\sum_i s_i = \sum_j t_j$, and consider a random 
bipartite simple graph $\ggb$ with these degree sequences as well as the
corresponding random bipartite multigraph $\ggx=\ggbx$
constructed by the configuration model. (These have $N$ edges.)
We order the two degree sequences in decreasing order as 
$s_{(1)}\ge\dots\ge s_{(n')}$ and $t_{(1)}\ge\dots\ge t_{(n'')}$, and let 
$s:=s_{(1)}=\max_i s_i$ and $t:=t_{(1)}=\max_j t_j$.
Label the vertices in the two parts $v_1,\dots,v_{n'}$ and
$w_1,\dots,w_{n''}$, in order of decreasing degrees; thus $v_i$ [$w_j$]
has degree $s_{(i)}$ [$t_{(j)}$].

\begin{theorem}[\citet{BlaSta}]\label{Tb}
Assume that $N\to\infty$. Then 
$\liminf_\ntoo \P\bigpar{\ggbx\text{ is simple}}>0$ if and only if the
following two conditions hold:
\begin{romenumerate}
\item 
  \begin{equation}\label{tb1}
    \sum_i\sum_j s_i(s_i-1)t_j(t_j-1) = O(N^2).
  \end{equation}
\item 
For any fixed $m\ge1$,
\begin{align}
  \sum_{i=\min\set{t,m}}^{n'} s_{(i)} =\gO(N), \label{tb2s}
\\
\sum_{j=\min\set{s,m}}^{n''} t_{(j)} =\gO(N). \label{tb2t}
\end{align}
\end{romenumerate}
\end{theorem}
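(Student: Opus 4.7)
The plan is to adapt the moment-matching approach of Lemmas \ref{LZ} and \ref{Ldtv} to the bipartite setting. For $i\in[n']$ and $j\in[n'']$, set
\begin{equation*}
\gl\ij\=\frac{\sqrt{s_i(s_i-1)t_j(t_j-1)}}{N},
\end{equation*}
let $\hX\ij\sim\Po(\gl\ij)$ be independent, and put $\hZ\=\sum_{i,j}\binom{\hX\ij}{2}$. The bipartite configuration model has no loops, so $\ggbx$ is simple iff $Z\=\sum_{i,j}\binom{X\ij}{2}$ vanishes, and a direct count gives $\E X\ij=s_it_j/N$ and $\E\binom{X\ij}{2}\sim\gl\ij^2/2=\E\binom{\hX\ij}{2}$. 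The first stage is the estimate $\E Z^m=\E\hZ^m+O(N\qqw)$ for each fixed $m$, by precisely the decomposition used in \refL{LZ}: uncouple the $X\ij$ into independent copies $\uc X\ij$, replace the indicators $I\gab$ successively by independent Bernoulli and then Poisson variables, and control the contribution of bad supports (those with a repeated half-edge index that vanishes for the true process). Since every support is loopless, the bookkeeping is if anything slightly cleaner than in the unipartite case.

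With the moment match established, the bipartite analog of \refL{Ldtv} yields $\dtv(Z,\hZ)\to 0$ whenever the moments of $\hZ$ stay bounded, and independence gives
\begin{equation*}
\P(\hZ=0)=\prod_{i,j}(1+\gl\ij)e^{-\gl\ij}=\exp\Bigpar{-\sum_{i,j}\bigpar{\gl\ij-\log(1+\gl\ij)}}.
\end{equation*}
Hence $\liminf_\ntoo\P(\ggbx\text{ is simple})>0$ is equivalent to the exponent being $O(1)$. Since $\gl\ij-\log(1+\gl\ij)\asymp\min(\gl\ij,\gl\ij^2)$, I would split the sum at $\gl\ij=1$: the small-$\gl$ part is $\asymp\sum_{i,j}\gl\ij^2=N\qww\bigpar{\sum_i s_i(s_i-1)}\bigpar{\sum_j t_j(t_j-1)}$, so its boundedness is exactly \eqref{tb1}; the large-$\gl$ part involves pairs with $s_it_j\gtrsim N$, and should be shown equivalent to \eqref{tb2s}--\eqref{tb2t}.

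The main obstacle is identifying the large-$\gl$ sum with the tail conditions \eqref{tb2s}--\eqref{tb2t}. A pair $(i,j)$ with $\gl\ij>1$ forces $s_i\gtrsim N/t_j$, so for appreciable large-$\gl$ mass either $t$ must itself be moderately large or a few $v$-vertices must carry degree comparable to $N$. Quantitatively, assuming \eqref{tb1}, the large-$\gl$ sum should be $O(1)$ iff \eqref{tb2s}--\eqref{tb2t} hold, proved by a direct counting argument: failure of \eqref{tb2s} for some fixed $m$ forces the top $\min(t,m)-1$ vertices on the $s$-side to absorb all but $o(N)$ of the $s$-half-edges, and a pigeonhole on the at most $t$ distinct partners available for each produces a diverging expected number of parallel edges, killing $\P(\hZ=0)$ and hence $\P(Z=0)$; the symmetric argument handles \eqref{tb2t}.

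Finally, when \eqref{tb1} itself fails the moment comparison cannot be applied directly, and I would handle this by a splitting argument in the style of the end of \refS{Spf}: split very high-degree vertices on the appropriate side into degree-one twins, which preserves $N$ and shrinks the relevant second-moment quantity, until the modified sequence satisfies \eqref{tb1}. Since splitting can only help simplicity, this sandwiches $\P(\ggbx\text{ is simple})$ above by the simplicity probability of the split multigraph, to which the Poisson formula applies; letting the splitting parameter tend to infinity then forces $\P(\ggbx\text{ is simple})\to 0$, exactly as in \eqref{asa}--\eqref{asb}.
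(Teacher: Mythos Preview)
There is a genuine gap: the Poisson comparison $\dtv(Z,\hZ)\to0$ does \emph{not} follow from \eqref{tb1} alone, and condition~(ii) is \emph{not} encoded in the Poisson formula $\P(\hZ=0)=\prod_{i,j}(1+\gl\ij)e^{-\gl\ij}$. Take the paper's own example $s_{(1)}=N-o(N)$, $t_{(1)}=2$, $t_{(j)}=1$ for $j\ge2$. Here only $\gl_{11}\to\sqrt2$ is nonzero, so $\sum_{i,j}\gl\ij^2=O(1)$ and $\P(\hZ=0)\to(1+\sqrt2)e^{-\sqrt2}>0$; yet $X_{11}=2$ w.h.p.\ (both half-edges at $w_1$ land at $v_1$), so $\P(Z=0)\to0$. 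Thus your claimed chain ``$\P(Z=0)=\P(\hZ=0)+o(1)$, and the exponent being $O(1)$ is equivalent to (i)$+$(ii)'' breaks on both counts: the total-variation step fails, and the exponent is $\asymp\sum_{i,j}\gl\ij^2$, which is exactly \eqref{tb1} and knows nothing about \eqref{tb2s}--\eqref{tb2t}. Your pigeonhole sketch for the necessity of (ii) appeals to ``killing $\P(\hZ=0)$'', but in this example $\P(\hZ=0)$ is not killed at all; the obstruction is that $X_{11}$ is far from Poisson.

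The underlying reason the moment estimate $\E Z^m=\E\hZ^m+O(N\qqw)$ fails here is that in the unipartite lemma the hypothesis $\sum_id_i^2=O(N)$ forces $\max_id_i=O(N\qq)$, which is what drives the error terms (cf.\ \refR{Rerr}). In the bipartite setting \eqref{tb1} does \emph{not} bound $s$ or $t$; the analogue of \refL{LZ} carries error $O\bigl((s+t)/N\bigr)$, so one needs $s,t=o(N)$ to run the argument. The paper therefore proves the Poisson formula only under that extra assumption (\refT{Tb1}), and then handles the remaining regime $s=\gO(N)$ (where \eqref{tb1} forces $t=O(1)$) by a direct combinatorial argument: if \eqref{tb2s} fails, pigeonhole on the $t$ edges from $w_1$ gives a double edge w.h.p.; if it holds, matching the half-edges from the finitely many $w_j$ with $t_{(j)}\ge2$ one at a time shows each avoids a repeat with probability bounded away from~$0$. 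Your splitting argument for the failure of \eqref{tb1} is fine and matches the paper's method in \refS{Spf}, but you still need this separate treatment of the $s=\gO(N)$ case to make the proof complete.
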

(We have reformulated (ii) from \cite{BlaSta} somewhat. Recall that $x=\gO(N)$
means that $\liminf x/N>0$.)

\begin{remark}
Here (i) corresponds to the condition $\sumdd=O(N)$ in \refT{T0}, while (ii)
is an additional complication.
Note that if $s=o(N)$ then \eqref{tb2s} holds, because the sum is $\ge
N-(m-1)s$; similarly, if $t=o(N)$ then \eqref{tb2t} holds.
Hence (ii) is  satisfied, and (i) is sufficient,  unless for some
subsequence either $s=\gO(N)$ or $t=\gO(N)$. Note also that both these
cannot occur when \eqref{tb1} holds; in fact, if $s=\gO(N)$, then
\eqref{tb1} implies $\sum_j t_j(t_j-1)=O(1)$ and thus $t=O(1)$.  
On the other hand, in such cases, (i) is not enough, as pointed out by
\citet{BlaSta}. For example, if $s_1=N-o(N)$, $t_1=2$ and $t_j=1$ for $j\ge2$,
then (i) holds but \eqref{tb2s} fails for $m=2$. Indeed, in this example,
there is \whp{} (\ie, with probability $1-o(1)$) a double edge $v_1w_1$, and
thus $\ggx$ is \whp{} not simple.
\end{remark}

We can prove \refT{Tb} too by the methods of this paper. 
(The proof by \citet{BlaSta} is different.)
There are no loops,
and thus no $X_i$, but we define $X\ij$ and $Y\ij$ as above (with the
original labelling) and let
$Z\=\sum_{i=1}^{n'} \sum_{j=1}^{n''} Y\ij$. 
Similarly, we define, for $i\in [n']$ and $j\in[n'']$,
\begin{align}
  \gl\ij&\= \frac{\sqrt{s_i(s_i-1)t_j(t_j-1)}}{N}, \label{bglij}
\end{align}
let  $\hX\ij\sim\Po(\gl\ij)$ and 
$\hY\ij\=\binom{\hX\ij}2$ be as above and let
$\hZ\=\sum_{i=1}^{n'} \sum_{j=1}^{n''} \hY\ij$. 
Note that \eqref{tb1} is $\sum_{i,j}\gl\ij^2=O(1)$.

\begin{theorem}\label{Tb1}
  Assume that $N\to\infty$ and that $s,t=o(N)$.
Then
  \begin{equation*}
	\begin{split}
\P\bigpar{\ggbx \text{ is simple}}	
&=
\P(Z=0)
=\P(\hZ=0)+o(1)
\\&
=\exp\biggpar{\sum_{i,j}\bigpar{\gl_{ij}-\log(1+\gl_{ij})}}	  
+o(1).
	\end{split}
  \end{equation*}
\end{theorem}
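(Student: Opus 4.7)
The plan is to mirror the scheme used for \refT{T1}: establish a bipartite analog of \refL{Ldtv}, namely $\dtv(Z,\hZ)\to0$, under the bipartite analog of \eqref{tb1} (\ie, $\sum_{i,j}\gl\ij^2=O(1)$), and handle the remaining case by vertex splitting. The explicit product formula follows immediately from independence of the $\hX\ij$ and the observation $\hY\ij=0\iff\hX\ij\le1$:
\begin{equation*}
\P(\hZ=0)=\prod_{i,j}\P(\hX\ij\le1)=\prod_{i,j}(1+\gl\ij)e^{-\gl\ij}=\exp\Bigpar{-\sum_{i,j}\bigpar{\gl\ij-\log(1+\gl\ij)}},
\end{equation*}
so the only substantive step is $\P(Z=0)=\P(\hZ=0)+o(1)$.

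Split into two cases. In \emph{Case 1}, $\sum_{i,j}\gl\ij^2=O(1)$; in particular each $\gl\ij=O(1)$, so \refL{Lcarl} applies verbatim to the bipartite $\hZ$ (its proof uses only the $\hX\ij$-summands), giving $(\E\hZ^m)^{1/m}=O(m^2)$ and the Carleman-type condition \eqref{carl+}. The subsequence-plus-method-of-moments argument of \refL{Ldtv} then reduces matters to the moment comparison $\E Z^m=\E\hZ^m+o(1)$, which is the bipartite analog of \refL{LZ}. The proof of \refL{LYij} transfers essentially unchanged---nothing in it required $i,j$ to lie on the same side of the graph---giving $\E Y\ij^\ell=\E\hY\ij^\ell+O\bigpar{(s+t)N\qw\gl\ij^2}$ for each fixed $\ell\ge1$; the role previously played by the bound \eqref{dN} is now played by $s+t=o(N)$ from the hypothesis. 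One then uncouples by introducing independent copies $I\gab\xij$ of $I\gab$ for each pair $(i,j)\in[n']\times[n'']$, forming $\uc X\ij$ and $\uc Z$ as in \eqref{ucxij}--\eqref{ucz} (with no loop terms). Combining single-pair estimates via the bipartite analog of \eqref{jui} gives $\E\uc Z^m=\E\hZ^m+o(1)$.

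The bound $\E Z^m=\E\uc Z^m+o(1)$ is the main obstacle. One expands both as sums over products $\prod_k I_{\ga_k\gb_k}$ whose support is a bipartite multigraph $F$ with at most $2m$ edges in which every positive-degree vertex has degree $\ge2$. A product is \emph{bad} if it contains two factors sharing a common half-edge whose partners lie at distinct vertices on the opposite side, forcing degree $\ge4$ at the shared vertex; bad products vanish in $Z^m$ but not in $\uc Z^m$. The bipartite version of the estimate \eqref{sf}--\eqref{sumfs}, with $(s+t)/N=o(1)$ playing the role of $\maxd/N\qq$, bounds the total bad contribution by $o(1)$. This is where the hypothesis $s,t=o(N)$ enters essentially; note that, as in \refL{LZ}, the sum $\sum_{i,j}\gl\ij$ need not be bounded, so error terms must be tracked in $\gl\ij^2$, which is controlled by the Case~1 assumption.

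In \emph{Case 2}, $\sum_{i,j}\gl\ij^2\to\infty$ along a subsequence; the elementary inequality $\gl-\log(1+\gl)\ge c\min(\gl,\gl^2)$ then forces $\P(\hZ=0)\to0$, and it remains to show $\P(Z=0)\to0$. I would adapt the vertex-splitting argument of \refS{Spf}: repeatedly split a vertex of largest degree on either side into a vertex of one-less degree together with a new degree-$1$ vertex on the same side, preserving $N$ and bipartiteness and only weakening simplicity, until the split sequences satisfy $\sum_i\hat s_i(\hat s_i-1)\cdot\sum_j\hat t_j(\hat t_j-1)\sim AN^2$ for a fixed $A>1$. The hypothesis $s,t=o(N)$ is exactly what makes condition (ii) of \refT{Tb} automatic (see the remark following that theorem) and keeps the splitting procedure in a meaningful regime. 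Applying Case~1 to the split multigraph and letting $A\to\infty$ yields $\P(Z=0)\to0$, completing the proof.
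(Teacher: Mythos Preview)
Your overall scheme is exactly the paper's: prove a bipartite analog of \refL{LZ} and \refL{Ldtv} under \eqref{tb1}, then handle the divergent case by vertex splitting. The product formula, the use of \refL{Lcarl}, the subsequence/Carleman argument, the adaptation of \refL{LYij} with $(s+t)/N$ in place of $\max d/N$, and the uncoupling $(Z,\uc Z,\hZ)$ all go through as you describe.

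The one place where your sketch is too optimistic is the bad-product estimate. You write that the bipartite analog of \eqref{sf}--\eqref{sumfs} goes through ``with $(s+t)/N=o(1)$ playing the role of $\max_i d_i/N^{1/2}$''. But the key reduction in \eqref{sf}, namely $(d_j/N^{1/2})^{\delta_j}\le(d_j/N^{1/2})^2$, used the \emph{per-vertex} bound $d_j\le N^{1/2}$ from \eqref{dN}. In the bipartite setting you have no such bound: $s$ and $t$ may be far larger than $N^{1/2}$ (you only know $s,t=o(N)$), and the only control on the degree sequences is the \emph{product} bound $\sum_i s_i(s_i-1)\cdot\sum_j t_j(t_j-1)=O(N^2)$. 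A direct imitation of \eqref{sf} therefore fails. The paper explicitly flags that \eqref{sf} must be modified and instead sums first over all supports $F$ with a fixed degree profile $(a_\nu),(b_\mu)$, then applies the elementary inequality $\sum_{i:s_i\ge2} s_i^{a}\le\bigl(\sum_{i:s_i\ge2}s_i^2\bigr)^{a/2}$ (valid for $a\ge2$) to obtain
\[
O\Bigl(N^{-\ell}\bigl(\textstyle\sum_i s_i(s_i-1)\bigr)^{(\ell-1)/2}\bigl(\textstyle\sum_j t_j(t_j-1)\bigr)^{\ell/2}\Bigr)
=O\bigl(N^{-1}(\textstyle\sum_j t_j(t_j-1))^{1/2}\bigr)
=O\bigl((t/N)^{1/2}\bigr),
\]
and symmetrically $O((s/N)^{1/2})$ if the bad vertex is on the other side. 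This is a global \Holder-type estimate rather than a per-vertex one, and it is genuinely needed here; your substitution does not produce a valid bound.
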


\begin{proof}[Proof (sketch)]
  This is proved as \refT{T1}, using analogues of Lemmas \ref{LZ} and
  \ref{Ldtv}, with only minor differences. Instead of \eqref{dN} we use the
  assumption $s,t=o(N)$, which leads to error terms of the order $O((s+t)/N)$,
\cf{} \refR{Rerr}.
Furthermore, \eqref{sf} has to be modifed.
Say that the vertex with a repeated half-edge is bad, and suppose that the
bad vertex is in the first part. Let the non-zero vertex degrees in $F$ be
$a_1,a_2,\dots$ in the first part and $b_1,b_2,\dots $ in the second part,
in any order with the bad vertex having degree $a_1$. 
Thus $\sum_\nu a_\nu=\sum_\mu b_\mu=\ell$.
The contribution from all $F$ with
given $(a_\nu)$ and $(b_\mu)$ is, using \Holder's inequality and \eqref{tb1},
\begin{equation*}
  \begin{split}
&O\biggpar{N^{-\ell}\sum_{i:s_i\ge2}s_i^{a_1-1}
\prod_{\nu\ge2}\Bigpar{\sum_{i:s_i\ge2} s_i^{a_\nu}}
\prod_{\mu\ge1}\Bigpar{\sum_{j:t_j\ge2} t_j^{b_\mu}}}
\\&\qquad
=
O\biggpar{N^{-\ell}
\Bigpar{\sum_{i:s_i\ge2}s_i^2}^{(a_1-1)/2+\sum_{\nu\ge2}a_\nu/2}
\Bigpar{\sum_{j:t_j\ge2} t_j^2}^{\sum_\mu b_\mu/2}
}
\\&\qquad
=
O\biggpar{N^{-\ell}
\Bigpar{\sum_{i}s_i(s_i-1)}^{(\ell-1)/2}
\Bigpar{\sum_{j}t_j(t_j-1)}^{\ell/2}
}
\\&\qquad
=
O\biggpar{N^{-1}
\Bigpar{\sum_{j}t_j(t_j-1)}^{1/2}
}
=O\Bigpar{t\qq/N\qq}.
  \end{split}
\end{equation*}
Summing over the finitely many
$(a_\nu)$ and $(b_\mu)$, and adding the  case with the bad vertex
in the second part,
we obtain $O\bigpar{(s+t)\qq/N\qq}=o(1)$.
\end{proof}

\begin{proof}[Proof of \refT{Tb}]
  The case $s,t=o(N)$ (when (ii) is automatic)
follows from \refT{Tb1}.

By considering subsequences, and symmetry, it remains only to consider the
case $s=\gO(N)$. It is easy to see that (i)  is necessary in this case too
so we may assume (i). As said
above, this implies $t=O(1)$, and furthermore, that only $O(1)$ degrees
$t_j$ are $>1$. By taking a further subsequence, we may assume that $t$ is
constant. 
Then \eqref{tb2t} always holds, and 
it suffices to consider the case $m=t$ in \eqref{tb2s}, \ie,
\begin{align}
  \sum_{i={t}}^{n'} s_{(i)} =\gO(N). \label{tb2st}
\end{align} 

If \eqref{tb2st} does not hold, then (at least for a subsequence),
\whp{} 
$ \sum_{i={t}}^{n'} s_{(i)} =o(N)$, and then \whp{} the $t$ edges from $w_1$
go only to \set{v_i:i<t}, so by the pigeonhole principle, there is a double
  edge. 

Conversely, if \eqref{tb2st} holds, it is easy to see that if we first match
the half-edges from $w_1$, $w_2$, \dots, in this order, there is 
(for large $n$) for each
half-edge a probability at least $\eps$ for some $\eps>0$ 
to not create a double edge; since
there are only $O(1)$ such vertices with $t_j>1$, it follows that 
$\P\bigpar{\ggx\text{ is simple}}$ is bounded below.
\end{proof}

\newcommand\AAP{\emph{Adv. Appl. Probab.} }
\newcommand\JAP{\emph{J. Appl. Probab.} }
\newcommand\JAMS{\emph{J. \AMS} }
\newcommand\MAMS{\emph{Memoirs \AMS} }
\newcommand\PAMS{\emph{Proc. \AMS} }
\newcommand\TAMS{\emph{Trans. \AMS} }
\newcommand\AnnMS{\emph{Ann. Math. Statist.} }
\newcommand\AnnPr{\emph{Ann. Probab.} }
\newcommand\CPC{\emph{Combin. Probab. Comput.} }
\newcommand\JMAA{\emph{J. Math. Anal. Appl.} }
\newcommand\RSA{\emph{Random Structures Algorithms} }
\newcommand\ZW{\emph{Z. Wahrsch. Verw. Gebiete} }
\newcommand\DMTCS{\jour{Discr. Math. Theor. Comput. Sci.} }

\newcommand\AMS{Amer. Math. Soc.}
\newcommand\Springer{Springer-Verlag}
\newcommand\Wiley{Wiley}

\newcommand\vol{\textbf}
\newcommand\jour{\emph}
\newcommand\book{\emph}
\newcommand\inbook{\emph}
\def\no#1#2,{\unskip#2, no. #1,} 
\newcommand\toappear{\unskip, to appear}

\newcommand\urlsvante{\url{http://www.math.uu.se/~svante/papers/}}
\newcommand\arxiv[1]{\url{arXiv:#1.}}
\newcommand\arXiv{\arxiv}

\def\nobibitem#1\par{}

\end{document}